\documentclass[11pt]{amsart}
\usepackage[T1]{fontenc}
\usepackage[utf8]{inputenc}
\usepackage[english]{babel}
\usepackage{lmodern}
\usepackage{microtype}
\usepackage{amsmath,amssymb,amsthm,mathtools}
\usepackage{enumitem}
\usepackage[a4paper,margin=28mm]{geometry}

\allowdisplaybreaks
\newtheorem{theorem}{Theorem}[section]
\newtheorem{proposition}[theorem]{Proposition}
\newtheorem{lemma}[theorem]{Lemma}
\newtheorem{corollary}[theorem]{Corollary}

\theoremstyle{definition}
\newtheorem{definition}[theorem]{Definition}
\newtheorem{example}[theorem]{Example}
\theoremstyle{remark}
\newtheorem{remark}[theorem]{Remark}

\newcommand{\tto}{\rightrightarrows}
\newcommand{\bx}{\bar x}
\newcommand{\by}{\bar y}
\newcommand{\bv}{\bar v}
\newcommand{\Lin}{\mathcal L}
\newcommand{\R}{\mathbb R}
\newcommand{\CB}{\mathcal{CB}}
\newcommand{\gph}{\mathop{\rm gph}\nolimits}
\newcommand{\domm}{\mathop{\rm dom}\nolimits}
\newcommand{\rge}{\mathop{\rm rge}\nolimits}
\newcommand{\dist}{\mathop{\rm dist}\nolimits}

\def\ball{{I\kern -.35em B}}

\title[Higher-order semiregularity via acyclic ranges]%
{Anisotropic Higher-Order Semiregularity of Degenerate Generalized Equations}
\author{Tom\'a\v{s} Roubal}
\address{Institute of Information Theory and Automation, Czech Academy of
	Sciences, Prague, Czech Republic}
\email{roubal@utia.cas.cz}
\urladdr{https://orcid.org/0000-0002-6137-1046}
\subjclass[2020]{Primary 49J53, 47J07; Secondary 47H04, 47J22, 54H25, 93B05}
\keywords{anisotropic semiregularity, higher-order open mapping theorem,
	$p$-regularity, generalized equation, acyclic multifunction, convex process,
	endpoint mapping}
\date{}

\begin{document}
	
	\begin{abstract}
		
		We give a self-contained triangular formulation of anisotropic higher-order
		covering and inverse estimates for smooth mappings and generalized equations.
		For a $C^p$ mapping into a finite-dimensional target, an indexed target
		decomposition is fixed, the corresponding triangular factor condition is
		imposed, and a direction satisfying the associated triangular $p$-kernel
		condition is chosen. Surjectivity of the resulting triangular $p$-factor
		operator yields a fixed-scale inclusion in which each nonzero block $Y_i$ is
		covered at order $t^i$, and hence an inverse estimate with exponent $1/i$ on
		that block. Zero blocks are allowed, and $p$ denotes the highest active grade.
		
		The smooth result is a derivative-level graded formulation: it verifies the
		model directly from the $C^p$ derivatives and records the blockwise
		fixed-scale inclusion needed for the set-valued analysis.
		
		For generalized equations, an auxiliary exact-model theorem isolates the
		range-transfer mechanism with acyclic correction fibres. The main sufficient
		criterion uses a closed convex-process inner approximation whose sum with
		the smooth triangular factor operator is surjective, allowing the set-valued
		term to supply missing directions.
		
	\end{abstract}
	
	\maketitle
	
	\section{Introduction}
	
	At a regular point, surjectivity of the first derivative gives a linear
	covering and a Lipschitz inverse estimate. At a degenerate point, higher
	derivatives may recover missing target directions at different rates. We
	study the resulting fixed-base semiregularity. Given an indexed
	decomposition
	$$
	Y=Y_1\oplus\cdots\oplus Y_p
	$$
	with projections $P_i$, set
	$$
	q_p(v):=\max_{1\le i\le p}\|P_i v\|^{1/i}.
	$$
	For a mapping $\mathcal M:X\tto Y$ and
	$(\bx,\by)\in\gph\mathcal M$, the conclusion obtained below has the form
	\begin{equation}
		\dist\bigl(\bx,\mathcal M^{-1}(y)\bigr)
		\le Mq_p(y-\by)
		\qquad (y\ \hbox{near }\by).
		\label{eq:intro-fixed-base-semiregularity}
	\end{equation}
	Thus the source point in the distance on the left-hand side is always the
	fixed point $\bx$.
	
	This quantifier structure distinguishes \eqref{eq:intro-fixed-base-semiregularity}
	from H\"older metric regularity. For $q\in(0,1]$, metric $q$-regularity
	around $(\bx,\by)$ requires neighborhoods $U$ and $V$ and a constant
	$\kappa>0$ such that
	\begin{equation}
		\dist\bigl(x,\mathcal M^{-1}(y)\bigr)
		\le
		\kappa\,\dist\bigl(y,\mathcal M(x)\bigr)^q
		\quad\text{for every}\quad (x,y)\in U\times V.
		\label{eq:intro-metric-q-regularity}
	\end{equation}
	In particular, the point $x$ varies in
	\eqref{eq:intro-metric-q-regularity}. H\"older metric subregularity fixes
	$y=\by$ instead, whereas semiregularity fixes $x=\bx$; see
	\cite{CibulkaFabianKruger,FrankowskaQuincampoix,
		KrugerHolderSubregularity}. Consequently, none of the results below asserts
	metric $q$-regularity around the graph point. If $Y_p$ is the only active
	block, then $q_p(v)=\|v\|^{1/p}$ and
	\eqref{eq:intro-fixed-base-semiregularity} is a H\"older semiregularity
	estimate of order $1/p$. With several active blocks, the estimate retains
	the sharper rate $1/i$ on $Y_i$ and implies, after localization, a scalar
	H\"older semiregularity estimate of order $1/p$. The two comparisons should
	not be conflated: metric $q$-regularity has stronger base-point uniformity,
	whereas the anisotropic error function $q_p$ contains finer componentwise information than a
	single worst-order H\"older power.
	
	\subsection*{Position with respect to higher-order inverse theorems}
	
	The higher-order inverse theorems of Frankowska cited below are formulated through
	higher-order variations of single- and set-valued mappings on complete
	metric spaces; see, in particular,
	\cite[Theorems~4.1 and~5.7]{FrankowskaHighOrder} and
	\cite[Theorem~2.1 and Section~7]{FrankowskaSomeInverse}. Their
	principal openness and inverse conclusions are uniform with respect to
	nearby graph points: the statements quantify over
	$(x,v)\in\gph\mathcal M$ near $(\bx,\by)$ and over nearby target
	perturbations. The resulting inverse estimates use a single H\"older order,
	namely $1/k$ in the $k$th-order statements. Frankowska and Quincampoix
	subsequently study this
	property explicitly as H\"older metric regularity, also called metric
	$q$-regularity \cite{FrankowskaQuincampoix}; the square-root
	metric-regularity results in
	\cite{ArutyunovKaramzinSquareRoot} have the same graph-uniform character
	in the second-order setting. The present conclusions concern a different
	regularity property: they give the fixed-base estimate
	\eqref{eq:intro-fixed-base-semiregularity} with an anisotropic target gauge.
	Accordingly, the present results neither strengthen nor generalize those
	metric-regularity theorems; they trade neighborhood uniformity in the
	source point for a blockwise fixed-base modulus suited to the subsequent
	generalized-equation transfer.
	
	The relation with Sussmann's homogeneous framework also requires a
	qualification. In \cite[Definitions~6, 8, and~11 and
	Theorem~12]{SussmannHighOrderOpen}, the dilation group acts on the source,
	$\nu$ is a homogeneous source pseudonorm, and the model satisfies
	$$
	G(\delta_t\xi)=tG(\xi)
	$$
	with ordinary scalar multiplication in the target; the approximation error
	is $o(\nu(\xi))$. Here, by contrast, the graded Taylor model satisfies, for
	every $s>0$,
	$$
	\mathcal T_p(sz)=sD_s\mathcal T_p(z)
	\quad\text{and}\quad
	q_p(sD_sy)=sq_p(y).
	$$
	Thus the grading acts in the target and $q_p$ measures target errors.
	Under the assumed surjectivity of $\Psi_p(h)$, the identities
	$\mathcal T_p(h)=0$ and $D\mathcal T_p(h)=\Psi_p(h)$ make $h$ a regular
	zero of the graded model. They do not, however, supply Sussmann's
	source-dilation homogeneity. His theorem is therefore not a direct
	consequence of the hypotheses stated here on the same source space. Compatible
	quasi-homogeneous or augmented-space reformulations may cover special
	cases, and no contrary claim is made. The role of
	Theorem~\ref{thm:p-factor-covering} is to verify directly from the
	$C^p$ derivatives the target-graded fixed-scale family. More precisely, for
	every finite-dimensional subspace $E\subset X$ satisfying
	$\Psi_p(h)(E)=Y$, there exist $c,\rho,r>0$ such that
	\begin{equation}
		\label{eq:intro-openness}
		f(\bx)+D_t(ct\ball_Y^\Sigma)
		\subset f(\bx+th+\rho t\ball_E)
		\quad\text{for every}\quad t\in(0,r],
	\end{equation}
	which is then used in the set-valued range-transfer argument. On the correction
	space $E$, this conclusion can be viewed through the rescaled family
	$$
	\Phi_t(z):=\frac{1}{t}D_t^{-1}
	\bigl(f(\bx+t(h+z))-f(\bx)\bigr),
	\qquad z\in E.
	$$
	Its derivative is
	$$
	D\Phi_t(z)=D_t^{-1}f'(\bx+t(h+z))\vert_E.
	$$
	Lemma~\ref{lem:p-factor-scaled-error} shows that, for every
	$\varepsilon>0$, the operators $D\Phi_t(z)$ are uniformly
	$\varepsilon$-close to $\Psi_p(h)\vert_E$ when $z$ lies in a sufficiently
	small neighborhood of $0$ and $t$ is sufficiently small, while
	$\Phi_t(0)\to0$. Thus the smooth theorem is a uniform first-order openness
	argument for this rescaled family; the higher-order content lies in deriving
	that approximation from the triangular $C^p$ data and retaining the
	blockwise target scales.
	
	For a standard recursive target decomposition, the operator
	$\Psi_p(h)$ used here agrees, up to a fixed invertible diagonal
	normalization, with the standard $p$-factor operator. Surjectivity for one
	chosen $h$ is therefore the established notion of $p$-regularity along
	$h$ \cite{BrezhnevaTretyakovMarsden,TretyakovMarsden,
		BednarczukEtAlSurvey}. Related singular tangency results are developed in
	\cite{BednarczukTretyakovNonlinearity,PrusinskaTretyakovTangent}. It is
	weaker than $p$-regularity at the reference
	point, which requires the condition along every direction in the exact
	$p$-kernel, and weaker than strong $p$-regularity, which imposes a uniform
	right-inverse bound over approximate kernel directions. For a general
	triangular decomposition used below, no identification with the standard
	recursive construction is asserted.
	A closely related selected-direction result was obtained by Prusińska and Tret'yakov \cite[Theorem~3]{PrusinskaTretyakov2011}. For a $C^{p+1}$ mapping between Banach spaces, they assume that a direction $h$ belongs to the corresponding nonzero $p$-kernel and that the mapping is $p$-regular along $h$. Under additional quantitative smallness conditions involving the residual at the reference point, they prove the existence of a zero of the form
	$$
	x^*=x_0+\omega h+\bar x(\omega),
	\qquad
	\|\bar x(\omega)\|\leq \frac{\omega}{2},
	$$
	with an $o(\omega)$ refinement under a stronger residual scaling. Their theorem is an existence result for a single equation. By contrast, Theorem~5.3 below gives, under triangular $C^p$ assumptions, a uniform moving-tube range inclusion for all anisotropically scaled target perturbations, retains the individual block scales $t^i$, and does not require a derivative of order $p+1$. Conversely, the result in \cite{PrusinskaTretyakov2011} is formulated for general Banach target spaces and directly incorporates a nonzero residual through quantitative conditions. Thus neither result directly subsumes the other; the contribution here is the target-graded covering conclusion and its subsequent use in the set-valued range-transfer argument.
	
	In the standard second-order case,
	the smooth estimate specializes to the result of Izmailov and Solodov
	\cite[Theorem~4.1]{IzmailovSolodov}; no new second-order estimate is
	claimed in that case.
	
	There is a further overlap with the theory of $\lambda$-truncations.
	After restriction to a finite-dimensional correction space and use of
	unit input weights and output degree $i$ on $Y_i$, the graded polynomial
	$\mathcal T_p$ is the relevant truncation. If every coordinate remainder
	admits a finite monomial majorant whose weighted degrees are strictly
	larger than the degree of the corresponding output coordinate, the
	componentwise inverse estimate follows from
	\cite[Theorem~3.2]{ArutyunovZhukovskiySbornik2025}; see also
	\cite{ArutyunovZhukovskiyMathNotes2025}. Hence no novelty is claimed in
	this strict weighted power-gap subclass. The distinction used here is
	that, under the triangular factor condition, $C^p$ regularity yields in the
	top block only
	$$
	P_p\bigl(f(\bx+z)-f(\bx)-\mathcal T_p(z)\bigr)=o(\|z\|^p)
	$$
	in general, with no estimate $O(\|z\|^{p+\gamma})$ for any
	$\gamma>0$. The proof below uses precisely this little-$o$ information,
	and Remark~\ref{rem:logarithmic-remainder} shows that it cannot always be
	replaced by a strict weighted power gap. This observation distinguishes the present
	hypotheses from the cited $\lambda$-truncation theorem; by itself it is
	not a priority claim relative to the broader homogeneous higher-order
	inverse literature.
	
	At third order, Boarotto, Monti, and Palmurella prove qualitative
	openness for smooth maps from a Banach space to a finite-dimensional
	manifold using intrinsic second- and third-order differentials
	\cite[Theorem~1.1]{BoarottoMontiPalmurella}. Their arbitrary-corank
	condition uses an isotropic second-order direction and a possibly
	different regular zero of the intrinsic third differential, and is not a
	restatement of the single-direction triangular condition used here. Their
	endpoint-map analysis computes the intrinsic third differential and
	derives necessary conditions for singular sub-Riemannian minimizers
	\cite[Theorem~1.2 and Proposition~3.5]{BoarottoMontiPalmurella}. The
	terminal results below are complementary: they apply arbitrary-order
	target-graded estimates to generalized endpoint inclusions and retain an
	explicit fixed-base anisotropic inverse modulus.
	
	Thus the contribution is not the exponent $1/p$, the idea of regularity
	along a selected direction, or a new abstract higher-order open mapping
	principle. It is the derivative-level verification, under the stated
	triangular $C^p$ hypotheses, of an explicit blockwise fixed-scale
	inclusion, together with its transfer to generalized equations through
	acyclic correction fibres and a convex-process mechanism by which the
	set-valued term may supply directions missing from the smooth factor
	operator.
	
	The ordered target decomposition is part of the hypotheses rather than a
	canonical object determined by $f$. Changing the decomposition, the
	projections, or the order labels may change the triangular conditions,
	factor surjectivity, and the resulting componentwise estimate. The convention
	concerning zero blocks is stated precisely in
	Remark~\ref{rem:grading-dependence}.
	
	The first main result concerns smooth mappings. For a $C^p$
	mapping with finite-dimensional target satisfying the triangular factor
	condition relative to a chosen decomposition, surjectivity of $\Psi_p(h)$
	together with $\Psi_p(h)h=0$ gives \eqref{eq:intro-openness}
	and consequently
	$$
	\dist\bigl(\bx,f^{-1}(y)\bigr)
	\le M\max_{1\le i\le p}
	\|P_i(y-f(\bx))\|^{1/i}.
	$$
	This moving-tube inclusion is the fixed-scale input for the set-valued
	analysis below.

	For the proofs, we record a compact acyclic range-transfer statement in the
	form needed later.  It is an immediate consequence of the
	Eilenberg--Montgomery fixed-point theorem \cite{EilenbergMontgomery}; no
	independent fixed-point result or general advance in range-transfer theory
	is claimed.  Its role is auxiliary: it separates model covering,
	approximation error, and correction-fibre topology.  The convex-domain,
	convex-fibre case may be compared with
	\cite[Theorem~2.1]{CibulkaRoubalSetValued}.  A separate remark records the
	Hausdorff-contractive alternative.
	
	We then turn to the generalized equation
	$$
	y\in f(x)+F(x),
	$$
	and distinguish an auxiliary transfer layer from a structural verification
	criterion. Section~\ref{sec:partial-p-factor} assumes that the exact partial
	triangular model has a fixed-scale covering and acyclic localized correction
	fibres, and transfers these properties to $f+F$ under anisotropically
	negligible approximation and centring errors. This is a transfer template,
	not a directly verifiable regularity criterion for $F$.
	
	The main generalized-equation result is the convex-process criterion of
	Section~\ref{sec:process-criterion}. A finite-dimensional closed convex
	process is embedded into the set-valued term through an anisotropically
	small continuous remainder. Surjectivity of its sum with the triangular
	factor operator yields the componentwise covering and inverse estimate,
	allowing $F$ to supply target directions missing from the smooth factor
	operator. A finite-dimensional hybrid example shows that the exact-model transfer
	theorem can still apply when no compatible zero-remainder exact
	convex-process inner model exists on the chosen correction space. These
	fixed-base results complement
	existing $p$-regularity and coderivative approaches to singular generalized
	equations
	\cite{BednarczukPrusinskaTretyakov,ChuongPositiveOrderGE}.
	
	Sections~\ref{sec:preliminaries}--\ref{sec:acyclic-range} introduce the
	anisotropic notation and the range-transfer tool.
	Section~\ref{sec:perturbation} records an ancillary fixed-base perturbation
	result.
	Sections~\ref{sec:p-factor}--\ref{sec:process-criterion} develop the smooth
	covering theorem, the exact-model transfer, and the convex-process criterion.
	Section~\ref{sec:examples} contains selected-direction, set-valued, and
	terminal-control examples.
	
	\section{Topological and anisotropic preliminaries}
	\label{sec:preliminaries}
	
	\subsection{Acyclic spaces and fixed-point tools}
	
	Given a metric space $(X,d)$, the closed and open balls centred at
	$x\in X$ with radius $r>0$ are denoted by $\ball_X[x,r]$ and
	$\ball_X(x,r)$, respectively.  The family of all nonempty closed bounded
	subsets of $X$ is denoted by $\CB(X)$ and the Hausdorff distance on
	$\CB(X)$ by $d_H$.
	
	For a set-valued mapping $F:X\tto Y$, its graph, domain, range, and inverse
	are denoted by $\gph F$, $\domm F$, $\rge F$, and $F^{-1}$, respectively.
	Thus
	$$
	F^{-1}(y):=\{x\in X:y\in F(x)\}.
	$$
	If $\Omega\subset X$, then $F\vert_{\Omega}$ denotes the restriction
	satisfying
	$$
	\gph(F\vert_{\Omega})=\gph F\cap(\Omega\times Y).
	$$
	For locally convex spaces $X$ and $Y$, the symbol $\Lin(X,Y)$ denotes the
	space of continuous linear mappings from $X$ to $Y$.
	We use the conventions $\dist(x,\varnothing):=+\infty$,
	$\sup\varnothing:=0$, and $\inf\varnothing:=+\infty$.
	
	A nonempty compact topological space $K$ is called \emph{acyclic} if
	$$
	\widetilde{\check H}_j(K;\mathbb Q)=0
	\quad\text{for every }j\ge 0,
	$$
	where $\widetilde{\check H}_j$ denotes reduced \v{C}ech homology with
	rational coefficients; see \cite[Chapter~I]{Gorniewicz} and
	\cite[Chapters~IX--X]{EilenbergSteenrod}.
	
	A metrizable space $K$ is called an absolute neighborhood retract
	(\emph{ANR}) if, whenever $K$ is embedded as a closed subset of a
	metrizable space $Z$, there exist an open neighborhood $U$ of $K$ in
	$Z$ and a continuous retraction
	$r:U\longrightarrow K.$
	
	A \emph{Fr\'echet space} is a Hausdorff complete metrizable locally convex
	topological vector space.
	
	Every nonempty compact convex subset of a Fr\'echet space is a compact
	metrizable absolute retract. Consequently, it is an acyclic ANR.
	
	The following form of the Eilenberg--Montgomery fixed-point theorem
	replaces convexity of the values by acyclicity and allows the domain
	to be any compact metrizable acyclic ANR. We use its closed-graph
	formulation. Indeed, for a set-valued mapping between compact Hausdorff
	spaces, closedness of the graph implies upper semicontinuity. To prepare the
	passage from smooth mappings to generalized equations, we
	first record an auxiliary acyclic range-transfer result.
	
	\begin{theorem}[Eilenberg--Montgomery]
		\label{thm:EM}
		Let $\Omega$ be a nonempty compact metrizable acyclic absolute
		neighborhood retract. Consider a set-valued mapping
		$\Phi:\Omega\tto\Omega$ with a closed graph and acyclic values.
		Then $\Phi$ has a fixed point.
	\end{theorem}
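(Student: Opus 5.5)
The plan is to derive the statement from the Lefschetz fixed-point theory for acyclic (Vietoris-type) multifunctions, as in \cite[Chapter~I]{Gorniewicz}. The statement is classical, and the paper uses it as a citation of \cite{EilenbergMontgomery}. So the aim is to reduce it to two standard ingredients, the Vietoris--Begle mapping theorem and the Lefschetz coincidence theorem for compact ANRs, rather than to reprove those ingredients.

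\emph{Step 1: split $\Phi$ through its graph.} Let $\Gamma:=\gph\Phi\subset\Omega\times\Omega$. Since the graph is closed and $\Omega$ is compact metrizable, $\Gamma$ is compact metrizable. Let $p,q:\Gamma\to\Omega$ be the two coordinate projections. Then $p$ is a continuous surjection, because every value $\Phi(x)$ is nonempty. Its fibres $p^{-1}(x)\cong\Phi(x)$ are compact and acyclic, so $p$ is a Vietoris map. The Vietoris--Begle theorem in rational \v{C}ech homology then shows that
\[
p_*:\check H_*(\Gamma;\mathbb Q)\to\check H_*(\Omega;\mathbb Q)
\]
is an isomorphism. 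This lets me define the induced homomorphism $\Phi_*:=q_*\circ p_*^{-1}$ on $\check H_*(\Omega;\mathbb Q)$.

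\emph{Step 2: compute the Lefschetz number.} Since $\Omega$ is acyclic, $\check H_0(\Omega;\mathbb Q)\cong\mathbb Q$ and $\check H_j(\Omega;\mathbb Q)=0$ for $j\ge1$. The group $\check H_0$ is generated by the class of any point. Pick $z\in\Gamma$. Then $p_*^{-1}$ sends $[p(z)]$ to $[z]$, and $q_*$ sends $[z]$ to $[q(z)]$, which equals $[p(z)]$ because $\Omega$ is connected (it is acyclic in degree $0$). Hence $\Phi_*$ is the identity on $\check H_0$, and the Lefschetz number is
\[
\Lambda(p,q)=\operatorname{tr}(\Phi_*)_0=1\neq0.
\]

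\emph{Step 3: apply the coincidence theorem.} The Lefschetz coincidence theorem for a pair $(p,q)$ with $p$ Vietoris, on a compact metrizable ANR, states that $\Lambda(p,q)\ne0$ forces a point $z\in\Gamma$ with $p(z)=q(z)$. Writing $z=(x,y)$, this means $x=y$ and $y\in\Phi(x)$, so $x\in\Phi(x)$, which is the desired fixed point.

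The main obstacle is Step 3, the coincidence theorem itself. If I had to prove it, I would proceed by approximation.
\begin{enumerate}[label=(\roman*)]
\item Embed $\Omega$ in a normed space as a retract $r:U\to\Omega$ of an open neighbourhood $U$.
\item Use the acyclicity of the fibres of $p$ to construct, for each $\varepsilon>0$, a single-valued continuous map $f_\varepsilon$ whose graph lies in the $\varepsilon$-neighbourhood of $\Gamma$ and which induces $\Phi_*$ in homology. This is Górniewicz's graph-approximation theorem, built skeleton by skeleton on a nerve of a fine cover.
\item Apply the classical Lefschetz theorem for compact ANRs to $r\circ f_\varepsilon$ to obtain approximate fixed points $x_\varepsilon$ with $\dist\bigl((x_\varepsilon,x_\varepsilon),\Gamma\bigr)\to0$.
\item Use the compactness of $\Omega$ and the closedness of $\Gamma$ to pass to the limit and obtain a genuine fixed point.
\end{enumerate}
The delicate point is step (ii): ensuring that the approximations carry the homology class $\Phi_*$. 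This is exactly where acyclicity, and not just connectedness, of the values enters.
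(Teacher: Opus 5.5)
Your Steps~1--3 are correct and follow the route behind the paper's citation. The paper gives no argument of its own: it specializes \cite[Theorem~1]{EilenbergMontgomery} and points to \cite[Corollary~32.12]{Gorniewicz}. That corollary is obtained exactly as you do, by splitting $\Phi$ through its graph, applying Vietoris--Begle to the first projection, and noting that the Lefschetz number is $1$ on an acyclic space. As a reduction to the Lefschetz coincidence theorem for Vietoris pairs on compact ANRs, your proposal is complete.

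The sketch you give for proving that coincidence theorem would, however, fail at step~(ii). Single-valued maps whose graphs are $\varepsilon$-close to $\Gamma$ can be built skeleton by skeleton only when the values are homotopically small, for instance contractible, $R_\delta$, or cell-like. At the $k$th stage you must fill a $(k-1)$-sphere inside a small neighbourhood of some value $\Phi(x)$, and homological acyclicity does not allow this.

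Here is a concrete counterexample. The space $\R P^2$ is $\mathbb Q$-acyclic and embeds as a set $K$ in the interior of a closed ball $\Omega\subset\R^4$. Let $\gamma:S^1\to K$ generate $\pi_1(K)$, and let $\theta$ be a retraction of $\Omega$ onto a planar unit disk $D\subset\Omega$ centred at $0$. Put $\Phi(x):=\{\gamma(\theta(x)/\|\theta(x)\|)\}$ if $\theta(x)\ne0$, and $\Phi(x):=K$ otherwise. Then $\Phi$ has a closed graph and acyclic values, so the theorem applies to it. Suppose $f$ had its graph within a small $\varepsilon$ of $\gph\Phi$. Then $f$ would be uniformly close to $x\mapsto\gamma(x/\|x\|)$ on the circle of radius $1/2$ in $D$. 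It would also map the whole disk of radius $1/2$ into a neighbourhood of $K$ that retracts onto $K$. Composing with that retraction would make $\gamma$ null-homotopic in $K$, which is impossible. So no graph approximations exist.

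Your diagnosis of the difficulty is also inverted. On an acyclic compact ANR every continuous self-map has Lefschetz number $1$, so ``carrying $\Phi_*$'' is automatic. If approximations existed, the theorem would follow from the single-valued Lefschetz theorem and a limit argument, without Vietoris--Begle at all. What fails is existence. This is why the classical treatments handle the coincidence step homologically rather than through single-valued approximations; Eilenberg and Montgomery work with Vietoris cycles and chain approximations on nerves. Either keep that step as a citation, as in your Step~3, or restrict the approximation route to homotopically trivial values.
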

	
	This is the specialization of
	\cite[Theorem~1, p.~215]{EilenbergMontgomery} obtained by taking the
	auxiliary compact metric space equal to $\Omega$ and the retraction equal
	to the identity. In the \v{C}ech-homology terminology used here, see also
	\cite[Definition~32.1 and Corollary~32.12, pp.~159, 163]{Gorniewicz};
	for further background, see \cite{GranasDugundji}.
	
	\subsection{Anisotropic error sets and dilations}
	
	\begin{definition}
		\label{def:anisotropic-structure}
		Let $p\ge1$ be an integer, let $Y$ be a nonzero normed space, and let
		$$
		Y:=Y_1\oplus\cdots\oplus Y_p
		$$
		be a topological direct-sum decomposition.  The associated continuous
		projections are denoted by $P_i:Y\longrightarrow Y_i$.  We use the
		reduced-index convention $Y_p\ne\{0\}$; initial and intermediate zero
		blocks are allowed.  Set
		$$
		\mathcal I:=\{i\in\{1,\ldots,p\}:Y_i\ne\{0\}\}.
		$$
		We equip $Y$ with
		the equivalent sum norm
		\begin{align*}
			\|y\|_{\Sigma}:=\sum_{i=1}^p\|P_i y\|.
		\end{align*}
		Its closed unit ball is denoted by $\ball_Y^{\Sigma}$.  For a normed space
		$E$ and a linear operator $T\in\Lin(E,Y)$, the corresponding operator norm is denoted by
		$\|T\|_{\Sigma}$, that is,
		$$
		\|T\|_{\Sigma}:=\sup_{\|e\|\le1}\|Te\|_{\Sigma}.
		$$
		The associated anisotropic error function is
		$$
		q_p(y):=\max_{i\in\mathcal I}\|P_i y\|^{1/i},
		$$
		and its closed sublevel sets are denoted by
		$$
		Q_p(s):=\{y\in Y:q_p(y)\le s\}
		\quad\text{for}\quad s\ge0.
		$$
		
		Equivalently,
		$$
		Q_p(s)=\{y\in Y:\|P_i y\|\le s^i,\ i=1,\ldots,p\}.
		$$
		The function $q_p$ is continuous, being a finite maximum of continuous
		functions, and it is subadditive. Indeed, for $y,z\in Y$ and every $i$,
		$$
		\begin{aligned}
			\|P_i(y+z)\|^{1/i}
			\le\bigl(\|P_i y\|+\|P_i z\|\bigr)^{1/i}
			\le\|P_i y\|^{1/i}+\|P_i z\|^{1/i}
			\le q_p(y)+q_p(z).
		\end{aligned}
		$$
		Consequently,
		$$
		Q_p(a)+Q_p(b)\subset Q_p(a+b)
		\quad\text{for all}\quad a,b\ge0.
		$$
		Each $Q_p(s)$ is closed and convex, and it is compact whenever $Y$ is
		finite-dimensional.    Both $q_p$ and the dilations below are
		relative to the chosen direct-sum decomposition.
		
		For every $t>0$, the \emph{normalized anisotropic scaling} is the bounded linear
		isomorphism
		$$
		D_t:=\sum_{i=1}^p t^{i-1}P_i,
		\quad\text{then}\quad
		D_t^{-1}=\sum_{i=1}^p t^{1-i}P_i.
		$$
		Thus $tD_t$ is the \emph{full order-$t$ anisotropic dilation}: for every $y\in Y$,
		$$
		q_p(tD_t y)=tq_p(y).
		$$
		Moreover,
		$$
		D_t(t\ball_Y^\Sigma)
		\subset Q_p(t)
		\subset D_t(pt\ball_Y^\Sigma).
		$$
		For the first inclusion, write $y=D_t(tu)$ with
		$\|u\|_\Sigma\le1$ and observe that
		$\|P_i y\|=t^i\|P_i u\|\le t^i$. Conversely, if $y\in Q_p(t)$, then
		$$
		\|D_t^{-1}y\|_\Sigma
		=\sum_{i=1}^pt^{1-i}\|P_i y\|
		\le pt,
		$$
		which proves the second inclusion.
		Thus $Q_p(t)$ and $D_t(t\ball_Y^\Sigma)$ describe equivalent anisotropic
		neighborhoods, but they are not identical in general.  For a set $B\subset Y$
		and a set-valued mapping $\mathcal H:E\tto Y$, we use the conventions
		$$
		D_tB:=\{D_t y:y\in B\}
		\quad\text{and}\quad
		(D_t\mathcal H)(e):=D_t\bigl(\mathcal H(e)\bigr).
		$$
	\end{definition}
	\begin{remark}[Equivalence of anisotropic openness and semiregularity]
		\label{rem:anisotropic-covering-inverse-equivalence}
		Let $X$ be a normed space, let $F:X\tto Y$, and let
		$(\bx,\by)\in\gph F$.  The following two properties are equivalent:
		\begin{enumerate}[label={\rm (\roman*)}]
			\item there are $M>0$ and a neighborhood $V$ of $\by$ such that
			\begin{equation}
				\dist\bigl(\bx,F^{-1}(y)\bigr)
				\le Mq_p(y-\by)
				\quad\text{for every}\quad y\in V;
				\label{eq:anisotropic-semiregularity}
			\end{equation}
			\item there are $a,r>0$ such that
			\begin{equation}
				\by+Q_p(at)
				\subset F\bigl(\ball_X[\bx,t]\bigr)
				\quad\text{for every}\quad t\in(0,r].
				\label{eq:anisotropic-openness}
			\end{equation}
		\end{enumerate}
		Property~{\rm (i)} is called anisotropic semiregularity relative to the
		chosen decomposition, and property~{\rm (ii)} is called anisotropic openness
		relative to that decomposition. More precisely,
		\eqref{eq:anisotropic-semiregularity} with a constant
		$M$ implies \eqref{eq:anisotropic-openness} with every
		$a\in(0,1/M)$ after decreasing $r$.  Conversely,
		\eqref{eq:anisotropic-openness} with a constant $a$ implies
		\eqref{eq:anisotropic-semiregularity} locally with the constant $1/a$.
		
		Indeed, suppose first that \eqref{eq:anisotropic-semiregularity} holds
		and fix $a\in(0,1/M)$.  Choose $r>0$ so small that
		$$
		\by+Q_p(ar)\subset V.
		$$
		If $t\in(0,r]$ and $w\in\by+Q_p(at)$, then
		$$
		\dist\bigl(\bx,F^{-1}(w)\bigr)
		\le Mq_p(w-\by)
		\le Mat<t.
		$$
		Hence $F^{-1}(w)$ meets $\ball_X[\bx,t]$, which proves
		\eqref{eq:anisotropic-openness}.
		
		Conversely, suppose that \eqref{eq:anisotropic-openness} holds and let
		$y\ne\by$ satisfy $q_p(y-\by)<ar$.  Set
		$$
		t:=\frac{q_p(y-\by)}{a}.
		$$
		Then $t\in(0,r)$ and $y\in\by+Q_p(at)$.  Therefore
		$$
		\dist\bigl(\bx,F^{-1}(y)\bigr)
		\le t
		=\frac1a q_p(y-\by).
		$$
		The estimate is immediate for $y=\by$.
	\end{remark}
	
	For any normed space $E$, we write $\ball_E:=\ball_E[0,1]$.
	
	\begin{corollary}
		\label{cor:moving-tube-inverse}
		Let $X$ be a normed space, let $\mathcal U\subset X$, and let
		$\mathcal M:\mathcal U\tto Y$. Fix
		$(\bx,\by)\in\gph\mathcal M$, $h\in X$, and a linear subspace $E\subset X$.
		Suppose that there are $c,\rho,r>0$ such that, for every $t\in(0,r]$,
		$$
		\bx+th+\rho t\ball_E\subset\mathcal U
		$$
		and
		\begin{equation}
			\by+D_t(ct\ball_Y^\Sigma)
			\subset\mathcal M(\bx+th+\rho t\ball_E).
			\label{eq:moving-tube-cover}
		\end{equation}
		Then there are $M_0>0$ and a neighborhood $V$ of $\by$ such that
		$$
		\dist\bigl(\bx,\mathcal M^{-1}(y)\bigr)
		\le M_0q_p(y-\by)
		\quad\text{for every}\quad y\in V.
		$$
		If a set $\mathcal A\subset\mathcal U$ contains $\bx$ and
		$\bx+th+\rho t\ball_E$ for every $t\in(0,r]$, then the same conclusion holds with
		$\mathcal M^{-1}(y)$ replaced by
		$\mathcal M^{-1}(y)\cap\mathcal A$.
	\end{corollary}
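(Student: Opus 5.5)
The plan is to deduce the corollary from Remark~\ref{rem:anisotropic-covering-inverse-equivalence}. The moving tube is contained in a ball about $\bx$, and the set $D_t(ct\ball_Y^\Sigma)$ contains a rescaled anisotropic sublevel set $Q_p(\cdot)$. Once both facts are in place, the argument of the remark converts the inclusion \eqref{eq:moving-tube-cover} into the inverse estimate.

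First, I compare the tube with a ball. Every point of $\bx+th+\rho t\ball_E$ lies within distance $t(\|h\|+\rho)$ of $\bx$. Setting $L:=\|h\|+\rho>0$, this gives
$$
\bx+th+\rho t\ball_E\subset\ball_X[\bx,Lt].
$$
Second, I compare the target sets. Applying the inclusion $D_t(t\ball_Y^\Sigma)\supset\ldots$ from Definition~\ref{def:anisotropic-structure} requires care with the constant $c$, so I will argue directly instead. Suppose $y\in Q_p(s)$ with $s\le t$. Then $\|P_iy\|\le s^i$. Put $u:=t^{-1}D_t^{-1}y$, so that
$$
\|u\|_\Sigma=\sum_i t^{-i}\|P_iy\|\le\sum_i (s/t)^i.
$$
Choosing $s:=\kappa t$ with $\kappa:=\min\{1,c/p\}$ gives $\|u\|_\Sigma\le p\kappa\le c$. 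Hence
$$
Q_p(\kappa t)\subset D_t(ct\ball_Y^\Sigma),
$$
and therefore $\by+Q_p(\kappa t)\subset\mathcal M(\ball_X[\bx,Lt]\cap\bx+th+\rho t\ball_E)$ for all $t\in(0,r]$.

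Third, I run the conversion argument of the remark. Set $V:=\{y:q_p(y-\by)<\kappa r\}$, which is a neighborhood of $\by$ because $q_p$ is continuous and $q_p(0)=0$.
\begin{itemize}
\item If $y=\by$, then $\bx\in\mathcal M^{-1}(\by)$ and the estimate is trivial.
\item Otherwise, let $t:=q_p(y-\by)/\kappa\in(0,r)$. Then $y\in\by+Q_p(\kappa t)$, so some $x$ in the tube at scale $t$ satisfies $y\in\mathcal M(x)$. This $x$ obeys
$$
\|x-\bx\|\le Lt=(L/\kappa)\,q_p(y-\by).
$$
\end{itemize}
This yields the estimate with $M_0:=L/\kappa$. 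One caveat: $q_p(y-\by)=0$ forces $y=\by$ only if $q_p$ vanishes exactly at $0$. That holds because $P_iy=0$ for all $i\in\mathcal I$, and the blocks outside $\mathcal I$ are zero.

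For the refinement, the point $x$ produced above lies in the tube, which is contained in $\mathcal A$. For $y=\by$, the point $\bx$ lies in $\mathcal A$ by hypothesis. So the same bound holds for $\dist(\bx,\mathcal M^{-1}(y)\cap\mathcal A)$. The only step needing care is the comparison constant between $Q_p(\kappa t)$ and $D_t(ct\ball_Y^\Sigma)$, including the case $c<p$ and the zero blocks; the direct computation above handles both. The rest is the routine bookkeeping of Remark~\ref{rem:anisotropic-covering-inverse-equivalence}.
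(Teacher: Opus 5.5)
Your proof is correct and is essentially the paper's argument. Your $\kappa=\min\{1,c/p\}$ plays the role of the paper's $1/K$ with $\sum_{i\in\mathcal I}K^{-i}\le c$, and both proofs take $t$ proportional to $q_p(y-\by)$, verify $\|D_t^{-1}(y-\by)\|_\Sigma\le ct$, bound the distance by $(\|h\|+\rho)t$, and keep the preimage inside the tube so that the $\mathcal A$-refinement follows.
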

	
	\begin{proof}
		Choose $K>1$ so that
		$$
		\sum_{i\in\mathcal I}K^{-i}\le c
		$$
		and let
		$$
		V:=\{y\in Y:q_p(y-\by)<r/K\}.
		$$
		Fix $y\in V$, let $d:=y-\by$, and first suppose that $d\ne0$. With
		$t:=Kq_p(d)$, we have $t\in(0,r)$ and
		$$
		\begin{aligned}
			\|D_t^{-1}d\|_\Sigma
			&=\sum_{i\in\mathcal I}t^{1-i}\|P_i d\|\\
			&\le t\sum_{i\in\mathcal I}K^{-i}
			\le ct.
		\end{aligned}
		$$
		Thus \eqref{eq:moving-tube-cover} supplies a preimage of $y$ in
		$\bx+th+\rho t\ball_E$, and hence
		$$
		\dist\bigl(\bx,\mathcal M^{-1}(y)\bigr)
		\le K(\|h\|+\rho)q_p(d).
		$$
		The assertion for $d=0$ follows from $(\bx,\by)\in\gph\mathcal M$.
		The same argument proves the restricted conclusion when all the tubes and
		$\bx$ lie in $\mathcal A$.
	\end{proof}
	
	\begin{remark}[Fixed-base versus metric $q$-regularity]
		The word \emph{semiregularity} is essential here. Metric $q$-regularity
		allows the point $x$ in
		$$
		\dist(x,F^{-1}(y))
		\le\kappa\,\dist(y,F(x))^q
		$$
		to vary near $\bx$, whereas \eqref{eq:anisotropic-semiregularity} keeps
		$x=\bx$ fixed and varies only $y$. These properties are genuinely
		different even in the scalar linear-rate case. For example,
		$$
		F(x):=\{0,x\},\qquad x\in\mathbb R,
		$$
		is semiregular at $(0,0)$ because
		$\dist(0,F^{-1}(y))=|y|$ for $y\ne0$, but it is not metrically
		$q$-regular at $(0,0)$ for any $q>0$: fix a small $x\ne0$ and let
		$y\to0$, $y\ne0$. Then
		$\dist(x,F^{-1}(y))=|x-y|$, while
		$\dist(y,F(x))=|y|$ for all sufficiently small $y$.
	\end{remark}
	
	\begin{remark}[Dependence on the indexed decomposition and zero blocks]
		\label{rem:grading-dependence}
		The ordered family $(Y_1,\ldots,Y_p)$ and its projections are part of the
		data.  Accordingly, $q_p$, $D_t$, the later triangular conditions and
		factor operators, and all componentwise estimates are relative to this
		fixed choice.  If the decomposition, the projections, or the order labels
		are changed while the mapping is kept fixed, the hypotheses must be
		verified again; no invariance under an arbitrary regrading is asserted.
		A grade-preserving target isomorphism conjugates the algebraic construction,
		while the quantitative constants change according to the norms of its block
		restrictions and their inverses.
		
		If $Y_i=\{0\}$, then $P_i=0$, and all terms and conditions belonging to
		that block are vacuous.  Nevertheless, an initial or intermediate zero
		block is retained because its position records a missing effective order:
		deleting it and renumbering subsequent blocks would change the powers in
		$q_p$ and $D_t$ and the derivative orders in the factor operator.  Only
		trailing zero blocks may be discarded without changing the indexed
		structure.  Thus, under the reduced-index convention, $p=\max\mathcal I$
		is the highest active order.  Maxima and sums written below over
		$i=1,\ldots,p$ are unchanged if they are restricted to $\mathcal I$.
		A zero target block should not be confused with a nonzero block on which
		the later factor term $L_i(h)$ vanishes: the former is vacuous, whereas
		the latter remains a genuine target component and prevents surjectivity of
		the smooth factor operator.  In the generalized-equation criteria, such a
		missing component may instead be supplied by the set-valued term.
	\end{remark}
	
	The two ingredients introduced above play different roles: acyclicity
	supplies a fixed point for the correction mapping, whereas $q_p$ and
	$D_t$ encode the target scales. The next section records a convenient
	consequence of the Eilenberg--Montgomery theorem in the form used later.
	The abstract statement itself does not depend on the chosen indexed
	decomposition; that dependence enters only in applications through the
	specialization $S=D_t$ and the associated sets $Q_p(t)$.
	
	\section{Ranges of set-valued mappings}
	\label{sec:acyclic-range}
	
	The statements in this section are consequences of the
	Eilenberg--Montgomery fixed-point theorem and are recorded as auxiliary
	tools for the later arguments.
	
	\begin{proposition}
		\label{thm:ambient-range}
		Let $X$ be a Hausdorff topological space, let $Y$ be a Hausdorff
		topological vector space, and let $y\in Y$. Let $\Omega\subset X$ be
		a nonempty compact metrizable acyclic ANR. Consider a set-valued
		mapping $G:X\tto Y$ such that $G|_{\Omega}$ has a closed graph, and
		a continuous mapping $h:\Omega\longrightarrow Y$. Assume that
		$$
		G^{-1}(y-h(u))\cap\Omega
		$$
		is acyclic for every $u\in\Omega$. Then there exists $x\in\Omega$
		such that
		$$
		y\in h(x)+G(x).
		$$
	\end{proposition}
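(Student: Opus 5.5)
We apply Theorem~\ref{thm:EM} to the correction mapping $\Phi:\Omega\tto\Omega$ defined by
$$
\Phi(u):=G^{-1}\bigl(y-h(u)\bigr)\cap\Omega
=\{x\in\Omega:\ y-h(u)\in G(x)\},\qquad u\in\Omega .
$$
By hypothesis each value $\Phi(u)$ is acyclic, and in particular nonempty, since acyclic spaces are nonempty by definition. Each value is also compact, which we verify below together with the graph. Suppose Theorem~\ref{thm:EM} gives a fixed point $x\in\Phi(x)$. Then $x\in\Omega$ and $y-h(x)\in G(x)$, that is, $y\in h(x)+G(x)$, which is the assertion.

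The main step is checking that $\Phi$ has a closed graph in $\Omega\times\Omega$. The graph is
$$
\gph\Phi=\{(u,x)\in\Omega\times\Omega:\ (x,\,y-h(u))\in\gph(G|_\Omega)\}.
$$
It is the preimage of the closed set $\gph(G|_\Omega)\subset\Omega\times Y$ under the continuous map
$$
(u,x)\mapsto\bigl(x,\,y-h(u)\bigr),\qquad \Omega\times\Omega\to\Omega\times Y .
$$
This map is continuous because $h$ is continuous and subtraction is continuous in the topological vector space $Y$. Here closedness of $\gph(G|_\Omega)$ is understood relative to $\Omega\times Y$, which is the natural reading of ``$G|_\Omega$ has a closed graph.'' So $\gph\Phi$ is closed in $\Omega\times\Omega$. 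Since $\Omega\times\Omega$ is compact, $\gph\Phi$ is compact, and each value $\Phi(u)$ is a closed subset of the compact set $\Omega$, hence compact. This makes the acyclicity hypothesis meaningful in the sense of Section~\ref{sec:preliminaries}.

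The remaining hypotheses of Theorem~\ref{thm:EM} hold directly: $\Omega$ is a nonempty compact metrizable acyclic ANR by assumption, and we have just shown that $\Phi$ has a closed graph and acyclic values. The only delicate point is the topological bookkeeping: using Hausdorffness of $X$ and $Y$ so that compact sets behave well, and making sure the closed-graph notion matches the one required by Theorem~\ref{thm:EM}. No estimate is needed, and I expect the argument to be short.
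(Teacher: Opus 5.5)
Your proof is correct and is the paper's argument: the same correction mapping $\Phi(u)=G^{-1}(y-h(u))\cap\Omega$, a closed graph obtained from continuity of $h$ and closedness of $\gph(G|_\Omega)$, and then Theorem~\ref{thm:EM}. The only differences are cosmetic. You write the graph as a preimage of a closed set where the paper uses nets. Your separate compactness check of the values is redundant, since acyclicity as defined already includes compactness, but it does no harm.
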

	
	\begin{proof}
		Consider the mapping $\Phi:\Omega\tto\Omega$ defined by
		$$
		\Phi(u):=G^{-1}(y-h(u))\cap\Omega\quad \text{for}\quad u\in \Omega.
		$$
		Clearly, $\Phi$ has acyclic values.  To see that $\gph\Phi$ is closed,
		pick any net $(u_\nu,x_\nu)$ in $\gph\Phi$ converging
		to a point $(u,x)\in\Omega\times\Omega$.  Then
		$$
		y-h(u_\nu)\in G(x_\nu).
		$$
		Since $h$ is continuous and $\gph(G|_{\Omega})$ is closed, we get
		$$
		y-h(u)\in G(x).
		$$
		Thus $\gph\Phi$ is closed, and Theorem~\ref{thm:EM} yields
		an $x\in\Omega$ such that $x\in\Phi(x)$, meaning that
		$y\in h(x)+G(x)$.
	\end{proof}
	
	Proposition~\ref{thm:ambient-range} is the form of the
	Eilenberg--Montgomery theorem used below.  It is recorded because the
	ambient-set formulation is convenient for the subsequent applications.
	Compare the convex-domain, convex-fibre formulation in
	\cite[Theorem~2.1]{CibulkaRoubalSetValued}.  Compact convex domains and
	nonempty convex or star-shaped correction fibres satisfy the stated
	topological assumptions, but convexity is not required here.
	
	\begin{corollary}
		\label{cor:error-set}
		Let $X$ and $Y$ be Fr\'echet spaces, let $\Omega\subset X$ be a nonempty
		compact metrizable acyclic ANR, and let $\Gamma\subset Y$ be nonempty.
		Let $G:X\tto Y$ be such that $G|_\Omega$ has a closed graph, let
		$f,g:\Omega\longrightarrow Y$ be continuous, and suppose that a set $\Xi\subset Y$
		satisfies
		\begin{enumerate}[label={\rm (\roman*)}]
			\item $(g-f)(\Omega)\subset\Xi$;
			\item $(g+G)(\Omega)\supset\Gamma+\Xi$;
			\item $(g+G)^{-1}(v)\cap\Omega$ is either empty or acyclic for every
			$v\in\Gamma+\Xi$.
		\end{enumerate}
		Then $(f+G)(\Omega)\supset\Gamma$.
	\end{corollary}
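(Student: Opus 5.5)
Fix $y\in\Gamma$; we must find $x\in\Omega$ with $y\in f(x)+G(x)$. The plan is to apply Proposition~\ref{thm:ambient-range} with the continuous map $h:=f-g$ on $\Omega$ and the set-valued map $\widetilde G:=g+G$. Then $h(x)+\widetilde G(x)=f(x)+G(x)$, so a point $x\in\Omega$ with $y\in h(x)+\widetilde G(x)$ is exactly what is needed. Since $g$ is only defined on $\Omega$, we define $\widetilde G$ on $X$ by $\widetilde G(x):=g(x)+G(x)$ for $x\in\Omega$ and $\widetilde G(x):=\varnothing$ otherwise. The fibres $\widetilde G^{-1}(\cdot)\cap\Omega$ are unaffected by this choice.

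First we verify the closed graph of $\widetilde G|_\Omega$. Take a net $(x_\nu,v_\nu)$ in the graph converging to $(x,v)\in\Omega\times Y$, and write $v_\nu=g(x_\nu)+w_\nu$ with $w_\nu\in G(x_\nu)$. Continuity of $g$ and of subtraction in the topological vector space $Y$ gives $w_\nu=v_\nu-g(x_\nu)\to v-g(x)$. Closedness of $\gph(G|_\Omega)$ then yields $v-g(x)\in G(x)$, that is, $v\in\widetilde G(x)$. Since $\Omega$ is compact, hence closed in the Hausdorff space $X$, the limit $x$ does lie in $\Omega$.

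Next we verify the fibre condition, which is the only step requiring care: we need nonemptiness in addition to acyclicity. For $u\in\Omega$ put $v:=y-h(u)=y+(g-f)(u)$. By (i), $(g-f)(u)\in\Xi$, so $v\in\Gamma+\Xi$. By (ii), $v\in(g+G)(\Omega)$, so the fibre $\widetilde G^{-1}(v)\cap\Omega=(g+G)^{-1}(v)\cap\Omega$ is nonempty. By (iii) it is therefore acyclic. Acyclic sets are by definition nonempty compact spaces, so compactness of the fibre must also be checked. It is a closed subset of the compact set $\Omega$: it is the projection onto $\Omega$ of $\gph(\widetilde G|_\Omega)\cap(\Omega\times\{v\})$, which is closed because the graph is closed and $Y$ is Hausdorff. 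Presumably (iii) already intends compactness; in any case the closed-graph argument supplies it.

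Proposition~\ref{thm:ambient-range} now produces $x\in\Omega$ with $y\in f(x)+G(x)$. Since $y\in\Gamma$ was arbitrary, this gives $\Gamma\subset(f+G)(\Omega)$. The Fréchet hypothesis is used only to ensure that $X$ is Hausdorff and that $Y$ is a Hausdorff topological vector space.
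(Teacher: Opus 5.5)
Your proposal is correct and follows the paper's proof: you apply Proposition~\ref{thm:ambient-range} to $g+G$ with perturbation $f-g$, using (i)--(ii) for nonemptiness and (iii) for acyclicity of the fibres. The only cosmetic difference is that you extend $g+G$ by the empty set to all of $X$, whereas the paper takes $\Omega$ itself as the ambient space, and you also write out the closed-graph check that the paper states without proof.
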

	
	\begin{proof}
		Define $H:\Omega\tto Y$ by $H(x):=g(x)+G(x)$; its graph is closed.
		Fix $y\in\Gamma$.  For every
		$u\in\Omega$, the point $y+(g-f)(u)$ belongs to $\Gamma+\Xi$; assumptions
		{\rm (ii)}--{\rm (iii)} make
		$$
		(g+G)^{-1}\bigl(y+(g-f)(u)\bigr)\cap\Omega
		$$
		nonempty and acyclic.  Apply Proposition~\ref{thm:ambient-range} with
		ambient space $\Omega$, mapping $H$, and perturbation $f-g$.
	\end{proof}
	
	The following scaled corollary is the form used in the subsequent
	arguments.
	
	\begin{corollary}
		\label{thm:scaled-transfer}
		Let $X$ be a Banach space and let $Y$ be a finite-dimensional normed
		space.  Let $S:Y\longrightarrow Y$ be a bounded linear
		isomorphism.  Consider a
		constant $\tau>0$, constants $\gamma,\delta\ge0$, a nonempty compact convex set
		$\Omega\subset X$, an operator $A\in\Lin(X,Y)$, a set-valued mapping
		$F:X\tto Y$ with a closed graph, and a continuous mapping
		$f:\Omega\longrightarrow Y$.  Assume that
		\begin{enumerate}
			
			\item[\rm (i)]
			$Au-f(u)\in S\bigl(\delta\tau\ball_Y\bigr)$ for every $u\in\Omega;$
			\item[\rm (ii)]
			$$
			(A+F)(\Omega)\supset
			S\bigl((\gamma+\delta)\tau\ball_Y\bigr);
			$$
			\item[\rm (iii)]
			$(A+F)^{-1}(v)\cap\Omega$
			is either empty or acyclic for every
			$v\in S((\gamma+\delta)\tau\ball_Y)$.
		\end{enumerate}
		Then
		$$
		(f+F)(\Omega)
		\supset S\bigl(\gamma\tau\ball_Y\bigr).
		$$
	\end{corollary}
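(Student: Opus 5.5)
The plan is to derive Corollary~\ref{thm:scaled-transfer} directly from Corollary~\ref{cor:error-set}. Apply it with the data
$$
\Gamma:=S(\gamma\tau\ball_Y),\qquad \Xi:=S(\delta\tau\ball_Y),\qquad g:=A|_\Omega,\qquad G:=F,
$$
keeping the given continuous $f$. The main task is to check that these choices satisfy the standing hypotheses of Corollary~\ref{cor:error-set} and its conditions (i)--(iii).

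\textbf{Standing hypotheses.} A Banach space and a finite-dimensional normed space are both Fr\'echet spaces. The set $\Omega$ is a nonempty compact convex subset of the Fr\'echet space $X$, so by the remark preceding Theorem~\ref{thm:EM} it is a compact metrizable absolute retract, hence an acyclic ANR. The restriction $F|_\Omega$ has closed graph, because $\gph F\cap(\Omega\times Y)$ is the intersection of a closed set with a closed set. The map $g=A|_\Omega$ is continuous since $A\in\Lin(X,Y)$. Finally, $\Gamma$ is nonempty because it contains $0$; this holds even when $\gamma=0$.

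\textbf{Conditions (i)--(iii).} Condition (i) of Corollary~\ref{cor:error-set} asks that $(g-f)(u)=Au-f(u)\in\Xi$, which is exactly hypothesis (i). For (ii) and (iii), the one computation needed is the Minkowski sum identity
$$
\Gamma+\Xi=S(\gamma\tau\ball_Y)+S(\delta\tau\ball_Y)=S\bigl(\gamma\tau\ball_Y+\delta\tau\ball_Y\bigr)=S\bigl((\gamma+\delta)\tau\ball_Y\bigr).
$$
The middle equality holds because $S$ is linear. The last equality uses $a\ball_Y+b\ball_Y=(a+b)\ball_Y$ for $a,b\ge0$, which follows from convexity of the ball. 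The inclusion ``$\subset$'' is the triangle inequality. For ``$\supset$'', write $y=\tfrac{a}{a+b}y+\tfrac{b}{a+b}y$ when $a+b>0$; the case $a=b=0$ is trivial.

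With this identity, hypothesis (ii) becomes condition (ii) of Corollary~\ref{cor:error-set}, since $(g+G)(\Omega)=(A+F)(\Omega)$. Hypothesis (iii) becomes condition (iii), because $(g+G)^{-1}(v)\cap\Omega=(A+F)^{-1}(v)\cap\Omega$: the inverse is taken only over points of $\Omega$, where $g$ and $A$ agree.

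Corollary~\ref{cor:error-set} then gives $(f+F)(\Omega)\supset\Gamma=S(\gamma\tau\ball_Y)$, which is the claim. There is no substantive obstacle. The only points needing care are the Minkowski identity, including the degenerate values $\gamma=0$ or $\delta=0$, and the observation that compact convex sets qualify as acyclic ANRs.
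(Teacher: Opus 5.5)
Your proposal is correct and is essentially the paper's own proof: both apply Corollary~\ref{cor:error-set} with $g=A|_\Omega$, $G=F$, $\Gamma=S(\gamma\tau\ball_Y)$, $\Xi=S(\delta\tau\ball_Y)$, using $\Gamma+\Xi=S((\gamma+\delta)\tau\ball_Y)$. You additionally spell out the standing checks the paper leaves implicit: the Fr\'echet setting, $\Omega$ being an acyclic ANR, the closed graph of $F|_\Omega$, the nonemptiness of $\Gamma$, and the Minkowski identity including the degenerate cases.
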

	\begin{proof}
		Apply Corollary~\ref{cor:error-set}, with
		$$
		g(u):=Au,\quad G:=F,\quad
		\Gamma:=S(\gamma\tau\ball_Y),\quad\text{and}\quad
		\Xi:=S(\delta\tau\ball_Y).
		$$
		Conditions {\rm (i)}--{\rm (iii)} verify its assumptions, since
		$$
		\Gamma+\Xi=S\bigl((\gamma+\delta)\tau\ball_Y\bigr).
		$$
	\end{proof}
	
	Since acyclic sets are nonempty by definition, assumptions~{\rm (ii)}--
	{\rm (iii)} of Corollary~\ref{thm:scaled-transfer} are equivalent to the
	single requirement that every displayed fibre be acyclic.
	
	\begin{remark}[Hausdorff-contractive alternative]
		\label{rem:nadler-alternative}
		Let $(Z,d)$ be complete, let $\Omega\subset Z$ be nonempty and closed,
		let $Y$ be a metric vector space, let $G:Z\tto Y$, let
		$h:\Omega\longrightarrow Y$, and fix $y\in Y$.  Let
		$$
		\Phi_y(u):=G^{-1}\bigl(y-h(u)\bigr)\cap\Omega .
		$$
		If $\Phi_y(u)\in\CB(\Omega)$ for every $u\in\Omega$ and
		$d_H(\Phi_y(u),\Phi_y(v))\le q_y d(u,v)$ for some
		$q_y\in[0,1)$, the Nadler theorem \cite{Nadler} gives $x\in\Phi_y(x)$ and hence
		$y\in(h+G)(\Omega)$.  Thus this targetwise contraction can replace the
		compact-acyclic hypothesis.  In the scaled setting the correction is
		$$
		(A+F)^{-1}\bigl(y-(f(u)-Au)\bigr)\cap\Omega .
		$$
		For $y\in S(\gamma\tau\ball_Y)$, assumptions {\rm (i)}--{\rm (ii)}
		guarantee nonemptiness, but not the Hausdorff estimate.  The acyclic and
		contractive mechanisms are independent, and the Nadler theorem alone does
		not give uniqueness.
	\end{remark}
	\section{Perturbation stability}
	\label{sec:perturbation}
	
	This section records an ancillary fixed-base perturbation consequence of
	Corollary~\ref{cor:error-set}. It is not used in the proofs of the smooth or
	generalized-equation covering theorems below. Rather, it can be applied after
	an anisotropic inverse estimate has been established, provided that the
	relevant localized inverse fibres are acyclic.
	
	At first order, perturbation stability of linear openness and metric
	regularity is described by the Lyusternik--Graves theorem; see
	Graves~\cite{Graves} and, for set-valued extensions, Cibulka, Dontchev, and
	Veliov~\cite{CibulkaDontchevVeliov}. Related positive-order results can be
	found in \cite{YenYaoKienPositiveOrder,UderzoNonlinearRate,HeNgStability}.
	Throughout this section, $Y$ is finite-dimensional and is equipped with the
	structure of Definition~\ref{def:anisotropic-structure}.
	\begin{theorem}
		\label{thm:anisotropic-stability}
		Let $F:\R^n\tto Y$ be a set-valued mapping with a closed graph and
		closed domain, let $(\bx,\by)\in\gph F$, and let
		$\eta:\domm F\longrightarrow Y$. Assume that there is $r_c>0$ such that
		$\eta$ is continuous on
		$\domm F\cap\ball_{\R^n}[\bx,r_c]$ in the relative topology.
		
		Suppose that there exist $a,r_F>0$ such that
		\begin{equation}
			\by+Q_p(at)
			\subset
			F\bigl(\ball_{\R^n}[\bx,t]\bigr)
			\quad\text{for every}\quad t\in(0,r_F].
			\label{eq:base-anisotropic-openness}
		\end{equation}
		Assume that there exist $r_0,s>0$ such that
		$$
		F^{-1}(w)\cap\ball_{\R^n}[\bx,t]
		$$
		is either empty or acyclic for every $t\in(0,r_0]$ and every
		$w\in\by+Q_p(s)$.
		
		Finally, suppose that there exist
		$\ell\in[0,a)$ and  $r_\eta>0$
		such that
		\begin{equation}
			q_p\bigl(\eta(u)-\eta(\bx)\bigr)
			\le
			\ell\|u-\bx\|\quad\text{for every}\quad u\in\domm F\cap\ball_{\R^n}[\bx,r_\eta].
			\label{eq:local-qp-perturbation}
		\end{equation}
		Then there exists $r>0$ such that
		\begin{equation}
			\eta(\bx)+\by+Q_p\bigl((a-\ell)t\bigr)
			\subset
			(\eta+F)\bigl(\ball_{\R^n}[\bx,t]\bigr)
			\quad\text{for every}\quad t\in(0,r].
			\label{eq:perturbed-anisotropic-openness}
		\end{equation}
		Consequently, there exists a neighborhood $W$ of
		$\eta(\bx)+\by$ such that
		$$
		\dist\bigl(\bx,(\eta+F)^{-1}(y)\bigr)
		\le
		\frac{1}{a-\ell}\,
		q_p\bigl(y-\eta(\bx)-\by\bigr)
		\quad\text{for every}\quad y\in W.
		$$
	\end{theorem}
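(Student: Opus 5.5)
We apply Corollary~\ref{cor:error-set} on the compact set $\Omega:=\domm F\cap\ball_{\R^n}[\bx,t]$, with $G:=F$, $g:\equiv\eta(\bx)$ (constant) and the continuous map $f:=\eta\vert_\Omega$. Then we pass to the inverse estimate through Remark~\ref{rem:anisotropic-covering-inverse-equivalence}.

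The first obstacle is that Corollary~\ref{cor:error-set} asks $\Omega$ to be a compact metrizable acyclic ANR, and $\domm F\cap\ball[\bx,t]$ need not be one. To avoid this, take instead $\Omega:=\ball_{\R^n}[\bx,t]$, which is compact and convex, hence an acyclic ANR. Extend $\eta$ continuously from the closed set $\domm F\cap\ball[\bx,t]$ to all of $\Omega$ by Tietze/Dugundji, keeping $\eta(\bx)$. Outside $\domm F$ we have $F=\varnothing$, so the values of the extension there are irrelevant. The approximation condition (i) only matters through the fibre map $u\mapsto F^{-1}(y+(g-f)(u))\cap\Omega$, and this needs $(g-f)(u)\in\Xi$ for all $u$. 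So I would rather apply Proposition~\ref{thm:ambient-range} directly with ambient $\Omega=\ball[\bx,t]$ and $h:=\tilde\eta$, the extension. The extension must satisfy $q_p(\tilde\eta(u)-\eta(\bx))\le \ell t$ on $\Omega$. To keep this, I would compose the Dugundji extension with a continuous retraction onto the convex compact set $\eta(\bx)+Q_p(\ell t)$, which exists since the set is closed and convex in finite dimensions (nearest-point map for a Euclidean norm).

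Fix $t\le r:=\min\{r_F,r_0,r_c,r_\eta\}$, shrunk further so that $Q_p(at)\subset Q_p(s)$, i.e. $at\le s$. Fix $y\in\eta(\bx)+\by+Q_p((a-\ell)t)$. For $u\in\Omega$, set $w(u):=y-\tilde\eta(u)$. Then
$$
w(u)-\by=\bigl(y-\eta(\bx)-\by\bigr)-\bigl(\tilde\eta(u)-\eta(\bx)\bigr)\in Q_p((a-\ell)t)+Q_p(\ell t)\subset Q_p(at),
$$
using subadditivity and the symmetry $Q_p(b)=-Q_p(b)$. By \eqref{eq:base-anisotropic-openness}, the fibre $F^{-1}(w(u))\cap\ball[\bx,t]$ is nonempty. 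By the acyclicity hypothesis (since $w(u)\in\by+Q_p(s)$) it is acyclic. Closedness of $\gph F$ gives closedness of $\gph F\vert_\Omega$. Proposition~\ref{thm:ambient-range} then yields $x\in\Omega$ with $y\in\tilde\eta(x)+F(x)$. Necessarily $x\in\domm F$, so $\tilde\eta(x)=\eta(x)$, which proves \eqref{eq:perturbed-anisotropic-openness}.

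Finally, the semiregularity estimate follows from the converse half of Remark~\ref{rem:anisotropic-covering-inverse-equivalence}, applied to $\eta+F$ at $(\bx,\eta(\bx)+\by)$ with constant $a-\ell>0$, giving $M=1/(a-\ell)$.

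The main obstacle is the extension step: producing a continuous $\tilde\eta$ on the whole ball whose values stay in $\eta(\bx)+Q_p(\ell t)$. Continuity on $\domm F\cap\ball[\bx,r_c]$ is only relative, so the Tietze extension composed with the metric projection onto the convex set is what I would use. If that fails, the fallback is to verify directly that $\domm F\cap\ball[\bx,t]$ can serve as $\Omega$.
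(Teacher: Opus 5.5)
Your proposal is correct and is essentially the paper's proof. The paper also works on the full ball $\ball_{\R^n}[\bx,t]$ and extends $\eta$ from $\domm F\cap\ball_{\R^n}[\bx,t]$ continuously into the convex set $\eta(\bx)+Q_p(\ell t)$, doing this directly via Dugundji rather than by Tietze followed by a projection. It then applies Corollary~\ref{cor:error-set} with $g:=0$, which is only a wrapper around the Proposition~\ref{thm:ambient-range} you invoke directly, while your fibre check via subadditivity and symmetry of $q_p$, the requirement $at\le s$, and the final passage through Remark~\ref{rem:anisotropic-covering-inverse-equivalence} match the paper's argument.
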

	
	\begin{proof}
		Set $c:=a-\ell>0$ and choose
		$r\in(0,\min\{r_F,r_0,r_\eta,r_c\}]$ such that $a r\le s$.
		
		Fix $t\in(0,r]$ and set
		$$
		\Omega_t:=\ball_{\R^n}[\bx,t]\quad\text{and}\quad
		K_t:=\domm F\cap\Omega_t.
		$$
		The set $\Omega_t$ is compact and convex, while $K_t$ is closed in
		$\Omega_t$. Moreover, \eqref{eq:local-qp-perturbation} gives
		$$
		\eta(K_t)
		\subset
		\eta(\bx)+Q_p(\ell t).
		$$
		Since $Q_p(\ell t)$ is convex, the Dugundji extension theorem
		\cite{Dugundji} provides a continuous extension
		$\widetilde\eta_t:
		\Omega_t\longrightarrow\eta(\bx)+Q_p(\ell t)$
		of $\eta\vert_{K_t}$.
		
		Define
		$$
		G:=F-\by,
		\quad
		k_t:=\widetilde\eta_t-\eta(\bx),
		\quad
		\Gamma_t:=Q_p(ct),
		\quad\text{and}\quad
		\Xi_t:=Q_p(\ell t).
		$$
		Then
		$$
		-k_t(\Omega_t)\subset\Xi_t.
		$$
		Since $c+\ell=a$ and $q_p$ is subadditive,
		$$
		\Gamma_t+\Xi_t
		\subset Q_p(at).
		$$
		Furthermore, \eqref{eq:base-anisotropic-openness} yields
		$$
		Q_p(at)\subset G(\Omega_t),
		$$
		and the choice of $r$ gives
		$$
		Q_p(at)\subset Q_p(ar)\subset Q_p(s).
		$$
		
		For every $v\in\Gamma_t+\Xi_t$, the localized fibre
		$$
		G^{-1}(v)\cap\Omega_t
		=
		F^{-1}(\by+v)\cap\Omega_t
		$$
		is therefore either empty or acyclic. Corollary~\ref{cor:error-set}, with
		$f:=k_t$, $g:=0,$ $\Gamma:=\Gamma_t,$ and $\Xi:=\Xi_t,$
		implies
		$$
		(k_t+G)(\Omega_t)\supset Q_p(ct).
		$$
		Whenever $u\in\Omega_t$ and $G(u)\ne\varnothing$, we have $u\in K_t$ and hence
		$\widetilde\eta_t(u)=\eta(u)$. Consequently,
		$$
		(\eta+F)(\Omega_t)
		\supset
		\eta(\bx)+\by+Q_p(ct),
		$$
		which proves \eqref{eq:perturbed-anisotropic-openness}.
		
		The inverse estimate follows from the equivalence of anisotropic
		openness and semiregularity in
		Remark~\ref{rem:anisotropic-covering-inverse-equivalence}, with
		openness rate $c=a-\ell$.
	\end{proof}
	
	\begin{remark}
		For $p=1$, the gauge $q_1$ is a norm, the perturbed openness rate is
		$a-\ell$, and the corresponding semiregularity constant is
		$$
		\frac{1}{a-\ell}.
		$$
		If the original openness rate is written as $a=1/M$, this becomes
		$$
		\frac{M}{1-M\ell},
		$$
		which is the familiar fixed-base Lyusternik--Graves perturbation bound.
		Theorem~\ref{thm:anisotropic-stability} remains a fixed-base openness
		and semiregularity result.
	\end{remark}
	
	\begin{remark}
		Condition~\eqref{eq:local-qp-perturbation} is equivalent to
		$$
		\|P_i(\eta(x)-\eta(\bx))\|
		\le
		\ell^i\|x-\bx\|^i,
		\qquad i=1,\ldots,p,
		$$
		on the same relative neighborhood.  Thus the $i$th component of the
		perturbation has the order required by the anisotropic scaling.  Standard
		Lipschitz continuity is generally insufficient for the components with
		$i\ge2$.
		
		The topological assumption on the localized inverse fibres cannot in
		general be deduced from fixed-base anisotropic openness, or equivalently
		from anisotropic semiregularity, alone.  It is
		automatic, for example, when all the relevant nonempty localized fibres
		are singletons.
	\end{remark}

	We now derive fixed-base anisotropic openness from the smooth triangular
	higher-order model.
	
	\section{Anisotropic covering relative to a triangular decomposition}
	\label{sec:p-factor}
	
	\subsection{Triangular $p$-factor operators and scaled approximation estimates}
	
	Throughout this section, let $p\ge2$, let $X$ be a Banach space, and let
	$Y$ be a Banach space equipped with the anisotropic range structure of
	Definition~\ref{def:anisotropic-structure}.  Let $U\subset X$ be open, let
	$f:U\longrightarrow Y$ be of class $C^p$, and fix $\bx\in U$.  We impose
	the triangular factor condition relative to the chosen decomposition
	\begin{equation}
		P_i f^{(j)}(\bx)=0
		\quad\text{whenever}\quad 1\le j<i\le p.
		\label{eq:P1}
	\end{equation}
	
	Throughout Sections~\ref{sec:p-factor}--\ref{sec:process-criterion}, all
	factor objects and terminology are relative to this fixed triangular
	decomposition and its projections $P_i$.  Condition~\eqref{eq:P1} alone
	does not assert that the decomposition is a recursive $p$-factor
	decomposition used in standard $p$-regularity theory; see
	\cite[Section~3, equations~(15)--(19)]{BednarczukEtAlSurvey}.
	The results are conditional on this one indexed choice and neither
	construct nor optimize it; after an arbitrary change of decomposition,
	the hypotheses must be verified again.  The reduced-index and zero-block
	conventions of Remark~\ref{rem:grading-dependence} remain in force.
	
	For $h\in X$ and $i\in\{1,\ldots,p\}$, define
	$L_i(h):X\longrightarrow Y_i$ by
	\begin{equation*}
		L_i(h)\xi
		:=\frac{1}{(i-1)!}
		P_i f^{(i)}(\bx)[h]^{i-1}\xi,
	\end{equation*}
	where $[h]^{i-1}\xi$ means that the first $i-1$ arguments of
	$f^{(i)}(\bx)$ are equal to $h$ and the last argument is equal to $\xi$,
	with $L_1(h)=P_1f'(\bx)$.  The \emph{normalized triangular $p$-factor
		operator relative to the chosen decomposition} is
	\begin{equation*}
		\Psi_p(h):=\sum_{i=1}^pL_i(h);
	\end{equation*}
	note that $\Psi_p(h):X\longrightarrow Y$.
	We refer to the surjectivity of $\Psi_p(h)$ as the \emph{triangular factor
		surjectivity condition relative to the chosen decomposition}.  The direction
	$h$ satisfies the \emph{triangular $p$-kernel condition relative to the
		chosen decomposition} if
	\begin{equation}
		\Psi_p(h)h=0.
		\label{eq:P4}
	\end{equation}
	This condition is equivalent to
	$$
	P_i f^{(i)}(\bx)[h]^i=0
	\quad\text{for every}\quad i\in\{1,\ldots,p\}.
	$$
	\begin{remark}[The graded Taylor model]
		\label{rem:sussmann-dictionary}
		Define
		$$
		\mathcal T_p(z)
		:=
		\sum_{i=1}^p\frac{1}{i!}P_i f^{(i)}(\bx)[z]^i,
		\qquad z\in X.
		$$
		Then, for every $s>0$,
		$$
		\mathcal T_p(sz)
		=
		\sum_{i=1}^p s^iP_i\mathcal T_p(z)
		=
		sD_s\mathcal T_p(z).
		$$
		Moreover,
		$$
		D\mathcal T_p(h)=\Psi_p(h).
		$$
		Under the triangular kernel condition \eqref{eq:P4}, we also have
		$$
		\mathcal T_p(h)=0.
		$$
		Thus, whenever $\Psi_p(h)$ is surjective, $h$ is a regular zero of
		the graded Taylor model, with ordinary input scaling and anisotropic
		target scaling $y\mapsto sD_s y$.
	\end{remark}
	If the chosen blocks and projections do arise from a standard recursive
	construction and $\widehat\Psi_p(h)$ denotes the standard unnormalized
	$p$-factor operator \cite[Definition~4]{BednarczukEtAlSurvey}, then
	$$
	\widehat\Psi_p(h)=J\circ\Psi_p(h),
	\qquad
	J:=\sum_{i=1}^p(i-1)!P_i.
	$$
	Here $J$ is an invertible diagonal operator.
	Consequently, in that special case the preceding surjectivity and kernel
	conditions agree with the corresponding standard ones; no such
	identification is used in the results below.
	
	For $t>0$, define the scaled triangular factor operator
	\begin{equation}
		A_t:=D_t\circ\Psi_p(h)=\sum_{i=1}^pt^{i-1}L_i(h).
		\label{eq:P5}
	\end{equation}
	
	Let $x_t:=\bx+th.$
	For the next lemma, let $E\subset X$ be a finite-dimensional
	linear subspace.
	
	The smooth data and the notation $L_i(h)$, $\Psi_p(h)$, $D_t$, $A_t$, and
	$x_t$ introduced above remain in force throughout
	Sections~\ref{sec:partial-p-factor} and~\ref{sec:process-criterion}.
	
	\begin{lemma}
		\label{lem:p-factor-scaled-error}
		Assume \eqref{eq:P1}, and let $E\subset X$ be a finite-dimensional
		linear subspace. For every $\varepsilon>0$ there exist $\rho,r>0$ such
		that
		$$
		x_t+\rho t\ball_E\subset U
		\quad\text{for every}\quad t\in(0,r],
		$$
		and the following assertions hold:
		\begin{enumerate}[label={\rm (\roman*)}]
			
			\item For every $t\in(0,r]$ and every $\xi\in\rho t\ball_E$, we have
			\begin{equation}
				\left\|D_t^{-1}
				\bigl(f(x_t+\xi)-f(x_t)-A_t\xi\bigr)
				\right\|_\Sigma
				\le\varepsilon\|\xi\|.
				\label{eq:P7}
			\end{equation}
			
			\item If, in addition, the triangular kernel condition
			\eqref{eq:P4} holds, then, after decreasing $r$ if necessary,
			\begin{equation}
				\left\|D_t^{-1}
				\bigl(f(x_t)-f(\bx)\bigr)
				\right\|_\Sigma
				\le\varepsilon\rho t
				\quad\text{for every}\quad t\in(0,r].
				\label{eq:P10}
			\end{equation}
			
		\end{enumerate}
		Consequently, under \eqref{eq:P4},
		\begin{equation}
			\left\|D_t^{-1}
			\bigl(f(x_t+\xi)-f(\bx)-A_t\xi\bigr)
			\right\|_\Sigma
			\le 2\varepsilon\rho t
			\label{eq:centered-scaled-error}
		\end{equation}
		for every $t\in(0,r]$ and every $\xi\in\rho t\ball_E$.
	\end{lemma}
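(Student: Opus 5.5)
The plan is to prove everything block by block. Since $\|\cdot\|_\Sigma=\sum_i\|P_i\cdot\|$ and $P_iD_t^{-1}=t^{1-i}P_i$, assertion (i) follows once we show, for each $i\in\{1,\ldots,p\}$ and with $\varepsilon$ replaced by $\varepsilon/p$,
$$
\bigl\|P_i\bigl(f(x_t+\xi)-f(x_t)-A_t\xi\bigr)\bigr\|\le \tfrac{\varepsilon}{p}\,t^{i-1}\|\xi\|.
$$
First choose $\rho\le1$ and $r$ so small that $\bx+t(h+\rho\ball_X)\subset U$ for $t\in(0,r]$. This gives the tube inclusion, and also $x_t+s\xi\in U$ for $s\in[0,1]$. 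Because $P_iA_t\xi=t^{i-1}L_i(h)\xi$, the mean value formula in integral form gives
$$
P_i\bigl(f(x_t+\xi)-f(x_t)-A_t\xi\bigr)=\int_0^1\bigl(P_if'(x_t+s\xi)-t^{i-1}L_i(h)\bigr)\xi\,ds .
$$
So it suffices to prove the operator bound $\|P_if'(\bx+t(h+z))-t^{i-1}L_i(h)\|\le \frac{\varepsilon}{p}t^{i-1}$ uniformly for $\|z\|\le\rho$ and $t\in(0,r]$. Here $z=s\xi/t$, and $\|z\|\le\rho$ because $\|\xi\|\le\rho t$.

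The key step is a Taylor expansion of the $C^{p-1}$ map $P_if'$ at $\bx$ to order $i-1\le p-1$:
$$
P_if'(\bx+w)=\sum_{j=0}^{i-1}\frac{1}{j!}P_if^{(j+1)}(\bx)[w]^j+o(\|w\|^{i-1}).
$$
The remainder is uniform in operator norm because it comes from continuity of $f^{(i)}$ at $\bx$. The triangular condition \eqref{eq:P1} kills every term with $j+1<i$, leaving only $\frac{1}{(i-1)!}P_if^{(i)}(\bx)[w]^{i-1}$. For $i=1$ this reduces to continuity of $f'$.

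Now substitute $w=t(h+z)$. The surviving term becomes $t^{i-1}\frac{1}{(i-1)!}P_if^{(i)}(\bx)[h+z]^{i-1}$. Its difference from $t^{i-1}L_i(h)$ is at most $t^{i-1}C\|z\|\le t^{i-1}C\rho$ by multilinearity, with $C$ depending on $\|h\|$ and $\|f^{(i)}(\bx)\|$. The remainder is $o(t^{i-1})$ because $\|w\|\le t(\|h\|+1)$. Choosing $\rho$ small first and then $r$ small gives (i). Finite-dimensionality of $E$ is not really needed here.

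For (ii), apply Taylor's formula to $f$ itself along $th$:
$$
P_i\bigl(f(x_t)-f(\bx)\bigr)=\sum_{j=1}^{i}\frac{t^j}{j!}P_if^{(j)}(\bx)[h]^j+o(t^i).
$$
The terms with $j<i$ vanish by \eqref{eq:P1}, and the term with $j=i$ vanishes by the kernel condition \eqref{eq:P4}. Hence $t^{1-i}\|P_i(f(x_t)-f(\bx))\|=o(t)$. With $\rho$ already fixed, decreasing $r$ makes each block at most $\varepsilon\rho t/p$, which gives \eqref{eq:P10}. The consequence \eqref{eq:centered-scaled-error} then follows from the triangle inequality, using $\varepsilon\|\xi\|\le\varepsilon\rho t$.

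The main obstacle is keeping the Taylor remainder in step (i) uniform in the operator norm over the whole tube, so that it is genuinely $o(t^{i-1})$ independently of $z$. I would handle this with the integral form of the remainder, bounded by $\sup_{\|v\|\le\|w\|}\|f^{(i)}(\bx+v)-f^{(i)}(\bx)\|\,\|w\|^{i-1}$. This needs only $C^p$ and no derivative of order $p+1$.
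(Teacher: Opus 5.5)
Your proposal is correct and follows essentially the paper's route: a blockwise Taylor expansion of $P_if'$ in which \eqref{eq:P1} removes the lower-order terms, giving a uniform operator-norm approximation of $D_t^{-1}f'$ by $\Psi_p(h)$ on the thin tube, followed by the integral mean value formula for (i) and Taylor's formula with \eqref{eq:P1} and \eqref{eq:P4} for the centre in (ii). The differences are cosmetic: you compare directly with $L_i(h)$ through a multilinear estimate in $z$, whereas the paper uses uniform convergence on the compact set $h+\rho_0\ball_E$ and then continuity of $w\mapsto\Psi_p(w)\vert_E$; your remark that finite-dimensionality of $E$ is not actually needed for this lemma is correct.
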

	
	\begin{proof}
		Choose $\rho_0,\delta>0$ such that
		$$
		\bx+\delta\ball_X\subset U,
		$$
		and let
		$$
		K_0:=h+\rho_0\ball_E.
		$$
		Since $E$ is finite-dimensional, $K_0$ is compact. After decreasing
		$r_0>0$, we have
		$$
		\bx+tw\in U
		\quad\text{for}\quad
		w\in K_0 \quad\text{and}\quad t\in(0, r_0].
		$$
		
		Taylor's formula, together with \eqref{eq:P1}, gives, uniformly for
		$w\in K_0$,
		$$
		t^{1-i}P_i f'(\bx+tw)|_E
		=
		L_i(w)|_E+o(1)
		\quad\text{as}\quad t\downarrow0,
		$$
		for every $i=1,\ldots,p$. Indeed, for $i\ge2$,
		$$
		\begin{aligned}
			&t^{1-i}P_i f'(\bx+tw)|_E-L_i(w)|_E\\
			&\quad=
			\frac{1}{(i-2)!}
			\int_0^1(1-s)^{i-2}
			P_i\bigl(f^{(i)}(\bx+stw)-f^{(i)}(\bx)\bigr)
			[w]^{i-1}|_E\,ds,
		\end{aligned}
		$$
		while the case $i=1$ follows directly from the continuity of $f'$.
		Consequently,
		$$
		\sup_{w\in K_0}
		\left\|
		D_t^{-1}f'(\bx+tw)|_E-\Psi_p(w)|_E
		\right\|_\Sigma
		\longrightarrow0
		\quad\text{as}\quad  t\downarrow0.
		$$
		
		By continuity of $w\mapsto\Psi_p(w)|_E$, choose
		$\rho\in(0,\rho_0]$ and then $r\in(0,r_0]$ so that
		$$
		\sup_{z\in\rho\ball_E}
		\left\|
		D_t^{-1}f'\bigl(\bx+t(h+z)\bigr)|_E
		-\Psi_p(h)|_E
		\right\|_\Sigma
		\le\varepsilon
		$$
		for every $t\in(0,r]$.
		
		Let $\xi\in\rho t\ball_E$. Since
		$D_t^{-1}A_t=\Psi_p(h)$, the fundamental theorem of calculus gives
		$$
		\begin{aligned}
			D_t^{-1}
			\bigl(f(x_t+\xi)-f(x_t)-A_t\xi\bigr)
			=
			\int_0^1
			\left[
			D_t^{-1}f'(x_t+s\xi)|_E-\Psi_p(h)|_E
			\right]\xi\,ds.
		\end{aligned}
		$$
		Taking norms proves \eqref{eq:P7}.
		
		Suppose now that \eqref{eq:P4} holds. For every
		$i=1,\ldots,p$, Taylor's formula and \eqref{eq:P1} yield
		$$
		P_i\bigl(f(\bx+th)-f(\bx)\bigr)
		=
		\frac{t^i}{(i-1)!}
		\int_0^1(1-s)^{i-1}
		P_i f^{(i)}(\bx+sth)[h]^i\,ds.
		$$
		Condition \eqref{eq:P4} implies
		$$
		P_i f^{(i)}(\bx)[h]^i=0,
		$$
		and hence
		$$
		P_i\bigl(f(\bx+th)-f(\bx)\bigr)=o(t^i).
		$$
		Therefore
		\begin{equation}
			\label{eq:p-factor-center-small-o}
			\left\|D_t^{-1}
			\bigl(f(x_t)-f(\bx)\bigr)
			\right\|_\Sigma
			=
			\sum_{i=1}^p
			t^{1-i}
			\left\|P_i\bigl(f(x_t)-f(\bx)\bigr)\right\|
			=o(t).
		\end{equation}
		Since $\rho>0$ is already fixed, decreasing $r$ proves
		\eqref{eq:P10}.
		
		Finally, \eqref{eq:P7}, \eqref{eq:P10}, and
		$\|\xi\|\le\rho t$ give
		$$
		\begin{aligned}
			\left\|D_t^{-1}
			\bigl(f(x_t+\xi)-f(\bx)-A_t\xi\bigr)
			\right\|_\Sigma\le
			\varepsilon\|\xi\|+\varepsilon\rho t
			\le2\varepsilon\rho t,
		\end{aligned}
		$$
		which proves \eqref{eq:centered-scaled-error}.
	\end{proof}
	
	The preceding lemma controls both errors required for range transfer:
	the nonlinear increment on the moving tube and the displacement of its
	centre. In particular, \eqref{eq:centered-scaled-error} provides a single
	scaled approximation estimate centred at $f(\bx)$. We now combine this
	estimate with triangular factor surjectivity.
	
	\subsection{Covering and componentwise inverse estimates}
	
	\begin{theorem}
		\label{thm:p-factor-covering}
		Let $Y$ be finite-dimensional. Assume \eqref{eq:P1} and \eqref{eq:P4},
		and suppose that $\Psi_p(h)$ is surjective.
		Then, for every finite-dimensional subspace $E\subset X$ satisfying
		$$
		\Psi_p(h)(E)=Y,
		$$
		there exist constants $c,\rho,r>0$ such that, for every
		$t\in(0,r]$,
		$$
		\bx+th+\rho t\ball_E\subset U
		$$
		and
		$$
		f(\bx)+D_t(ct\ball_Y^\Sigma)
		\subset
		f(\bx+th+\rho t\ball_E).
		$$
	\end{theorem}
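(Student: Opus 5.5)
The plan is to apply Corollary~\ref{thm:scaled-transfer} on the compact convex set $\Omega_t:=x_t+\rho t\ball_E$, with $S:=D_t$, $\tau:=t$, $F$ the constant mapping $F\equiv\{f(\bx)-A_tx_t\}$ (closed graph), and linear part $A:=A_t$ restricted suitably. The approximation error comes from Lemma~\ref{lem:p-factor-scaled-error}.

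\emph{Step 1: a bounded right inverse on $E$.} Since $\Psi_p(h)(E)=Y$ and both spaces are finite-dimensional, the restriction $\Psi_p(h)|_E$ is open. Hence there is $\kappa>0$ with
$$
\ball_Y^\Sigma\subset\Psi_p(h)(\kappa\ball_E).
$$
Fix $\rho>0$ and set $c_0:=\rho/\kappa$. For every $t>0$,
$$
D_t(c_0t\ball_Y^\Sigma)\subset D_t\Psi_p(h)(\rho t\ball_E)=A_t(\rho t\ball_E).
$$
To make the fibres acyclic, I would choose a complement: write $E=E_0\oplus E_1$ with $E_0:=\ker\Psi_p(h)|_E$. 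Then $\Psi_p(h)|_{E_1}$ is a linear isomorphism onto $Y$. I would work on $E_1$ instead of $E$. Its image of $\rho t\ball_{E_1}$ contains $D_t(c_0t\ball_Y^\Sigma)$ for a suitable $c_0$, and each fibre $\{\xi\in\rho t\ball_{E_1}:A_t\xi=v\}$ is a single point or empty, hence acyclic. Since $\ball_{E_1}\subset\ball_E$, the conclusion for $E_1$ implies the one for $E$.

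\emph{Step 2: apply the transfer.} Parametrize by $\xi\in\Omega:=\rho t\ball_{E_1}$ and let $g(\xi):=f(x_t+\xi)-f(\bx)$, which is continuous. Take $A:=A_t|_{E_1}$ and $F\equiv\{0\}$. Lemma~\ref{lem:p-factor-scaled-error}, applied with a given $\varepsilon$, supplies $\rho_\varepsilon,r$, and I would take $\rho\le\rho_\varepsilon$. Then \eqref{eq:centered-scaled-error} gives
$$
A_t\xi-g(\xi)\in D_t(2\varepsilon\rho t\ball_Y^\Sigma),
$$
which is hypothesis (i) with $\delta=2\varepsilon\rho$ and $\tau=t$. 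For hypothesis (ii), set $\gamma:=c_0-\delta$; then Step~1 gives $A(\Omega)\supset D_t(c_0t\ball_Y^\Sigma)$. Hypothesis (iii) is exactly the singleton fibres from Step~1. The corollary then yields
$$
g(\Omega)\supset D_t(\gamma t\ball_Y^\Sigma),
$$
that is,
$$
f(\bx)+D_t(ct\ball_Y^\Sigma)\subset f(x_t+\rho t\ball_E)\quad\text{with } c:=\gamma.
$$
The corollary uses the norm ball $\ball_Y$; I take it to be $\ball_Y^\Sigma$, since $Y$ may be given the sum norm.

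\emph{Main obstacle: the order of the constants.} Lemma~\ref{lem:p-factor-scaled-error} produces $\rho$ depending on $\varepsilon$, while $c_0=\rho/\kappa$ also scales with $\rho$. The constraint $\gamma>0$ reads $2\varepsilon\rho<\rho/\kappa$, i.e.\ $\varepsilon<1/(2\kappa)$. This is independent of $\rho$, which is exactly why \eqref{eq:centered-scaled-error} is stated as proportional to $\rho t$. So I would fix $\kappa$ from $E_1$ first, then $\varepsilon:=1/(4\kappa)$, then take $\rho,r$ from the lemma applied to the subspace $E_1$ (or to $E$, which contains it), and finally set $c:=\rho/(2\kappa)$. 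The tube inclusion $x_t+\rho t\ball_E\subset U$ is supplied by the lemma.
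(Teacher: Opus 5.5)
Your Step~2 is the paper's proof: Corollary~\ref{thm:scaled-transfer} with $S=D_t$, $\tau=t$, $A=A_t$, $F\equiv\{0\}$ and the map $\xi\mapsto f(x_t+\xi)-f(\bx)$, with $\varepsilon=1/(4\kappa)$ and $\gamma=\delta=\rho/(2\kappa)$, where your openness constant $\kappa$ plays the role of the paper's right-inverse bound $C$. The only difference is that the paper works on all of $E$ and notes that the fibres of $A_t$ on $\rho t\ball_E$ are nonempty, compact and convex, hence acyclic; so your passage to the complement $E_1$ is harmless but unnecessary, and the constant-$F$ framing in your opening sentence is superseded by Step~2 and should be dropped.
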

	
	\begin{proof}
		Fix a finite-dimensional subspace $E\subset X$ such that
		$\Psi_p(h)(E)=Y$, and let $R:Y\longrightarrow E$ be a right inverse of
		$\Psi_p(h)\vert_E$. Set
		$$
		C:=\max\left\{
		1,\|R\|_{(Y,\|\cdot\|_\Sigma)\to X}
		\right\}.
		$$
		Apply Lemma~\ref{lem:p-factor-scaled-error} with
		$ \varepsilon:=\tfrac{1}{4C}.$
		It provides $\rho,r>0$ such that
		$$
		x_t+\rho t\ball_E\subset U
		\quad\text{for every}\quad t\in(0,r],
		$$
		and, by \eqref{eq:centered-scaled-error},
		\begin{equation}
			\left\|D_t^{-1}
			\bigl(f(x_t+\xi)-f(\bx)-A_t\xi\bigr)
			\right\|_\Sigma
			\le \frac{\rho}{2C}t
			\label{eq:centered-error-for-covering}
		\end{equation}
		for every $t\in(0,r]$ and every $\xi\in\rho t\ball_E$.
		
		Fix $t\in(0,r]$ and set
		$$
		\Omega_t:=\rho t\ball_E
		\quad\text{and}\quad
		\widetilde\phi_t(\xi):=f(x_t+\xi)-f(\bx),
		\quad \xi\in\Omega_t.
		$$
		Since $A_t=D_t\Psi_p(h)$ and $\|R\|\le C$,
		$$
		A_t(\Omega_t)
		\supset
		D_t\left(\frac{\rho}{C}t\ball_Y^\Sigma\right).
		$$
		Indeed, if $\|z\|_\Sigma\le\rho t/C$, then
		$\xi:=Rz$ belongs to $\Omega_t$ and $A_t\xi=D_tz$.
		Every fibre of $A_t\vert_{\Omega_t}$ over the set on the right is
		nonempty, compact, and convex.
		
		Moreover, \eqref{eq:centered-error-for-covering} gives
		$$
		A_t\xi-\widetilde\phi_t(\xi)
		\in
		D_t\left(\frac{\rho}{2C}t\ball_Y^\Sigma\right)
		\quad\text{for every}\quad \xi\in\Omega_t.
		$$
		Corollary~\ref{thm:scaled-transfer}, applied with domain space $E$, with
		$Y$ endowed with $\|\cdot\|_\Sigma$, and with
		$$
		S:=D_t,\quad
		\tau:=t,\quad
		A:=A_t\vert_E,\quad
		F\equiv\{0\},\quad
		f:=\widetilde\phi_t,\quad
		\gamma=\delta:=\frac{\rho}{2C},
		$$
		therefore yields
		$$
		\widetilde\phi_t(\Omega_t)
		\supset
		D_t\left(\frac{\rho}{2C}t\ball_Y^\Sigma\right).
		$$
		Thus
		$$
		f(\bx)+D_t\left(\frac{\rho}{2C}t\ball_Y^\Sigma\right)
		\subset
		f(x_t+\Omega_t)
		=
		f(\bx+th+\rho t\ball_E).
		$$
		The conclusion follows with $c:=\frac{\rho}{2C}.$
	\end{proof}
	The moving-tube inclusion also yields fixed-base anisotropic openness.
	Indeed, set
	$$
	L:=\|h\|+\rho
	$$
	and choose $\kappa>0$ such that
	$$
	\sum_{i\in\mathcal I}\kappa^i\le c.
	$$
	After decreasing $r$ if necessary, assume that
	$$
	\ball_X[\bx,Lr]\subset U.
	$$
	If $t\in(0,r]$ and $d\in Q_p(\kappa t)$, then
	$$
	\|D_t^{-1}d\|_\Sigma
	=
	\sum_{i\in\mathcal I}t^{1-i}\|P_i d\|
	\le
	t\sum_{i\in\mathcal I}\kappa^i
	\le ct.
	$$
	Consequently,
	$$
	Q_p(\kappa t)
	\subset
	D_t(ct\ball_Y^\Sigma).
	$$
	Moreover,
	$$
	\bx+th+\rho t\ball_E
	\subset
	\ball_X[\bx,Lt].
	$$
	Letting $s:=Lt$ and $a:=\kappa/L$, we obtain
	\begin{equation}
		f(\bx)+Q_p(as)
		\subset
		f\bigl(\ball_X[\bx,s]\bigr)
		\quad\text{for every}\quad s\in(0,Lr].
		\label{eq:p-factor-fixed-base-openness}
	\end{equation}
	Thus $f$ is anisotropically open at $(\bx,f(\bx))$ with rate $a$.
	By Remark~\ref{rem:anisotropic-covering-inverse-equivalence}, it is also
	anisotropically semiregular there. More precisely, on the neighborhood
	$$
	W:=f(\bx)+\{z\in Y:q_p(z)<\kappa r\},
	$$
	we have
	$$
	\dist\bigl(\bx,f^{-1}(y)\bigr)
	\le
	\frac{L}{\kappa}\,
	q_p\bigl(y-f(\bx)\bigr)
	\quad\text{for every}\quad y\in W.
	$$
	Since $p=\max\mathcal I$, after possibly shrinking $W$ this also yields
	$$
	\dist\bigl(\bx,f^{-1}(y)\bigr)
	\le
	\widetilde M\|y-f(\bx)\|^{1/p}
	\quad\text{for every}\quad y\in W
	$$
	with some $\widetilde M>0$.

	\begin{remark}[A logarithmic $C^p$ remainder without a strict power gap]
		\label{rem:logarithmic-remainder}
		Let $p\ge2$, put
		$$
		U:=\{x\in\mathbb R^3:\|x\|<1/2\},
		\qquad
		\mathfrak m(x):=x_1^{p-1}x_2,
		$$
		and define
		$$
		g(x):=\frac{\mathfrak m(x)}{\log(e/\|x\|)}
		\quad (x\ne0),
		\qquad
		g(0):=0.
		$$
		The logarithmic factor is a standard slowly varying correction at the
		origin; for background on slowly varying functions, see
		\cite[Section~1.5]{BinghamGoldieTeugels}. Put
		$L(x):=1/\log(e/\|x\|)$ for $x\ne0$. For every multi-index $\gamma$
		with $|\gamma|\ge1$, radial differentiation gives
		$$
		|D^\gamma L(x)|
		\le
		\frac{C_\gamma}{\|x\|^{|\gamma|}\log(e/\|x\|)}.
		$$
		Since $\mathfrak m$ is homogeneous of degree $p$, Leibniz's rule yields
		$$
		|D^\beta g(x)|
		\le
		C_\beta\frac{\|x\|^{p-|\beta|}}{\log(e/\|x\|)}
		\qquad
		(0<\|x\|<1/2,\ |\beta|\le p).
		$$
		The right-hand side tends to zero for $|\beta|\le p$; the standard
		removable-singularity argument applied successively to the derivatives
		therefore gives $g\in C^p(U)$ and $D^\beta g(0)=0$ for every
		$|\beta|\le p$.
		For
		$$
		f(x):=\bigl(x_3,\mathfrak m(x)+g(x)\bigr),
		$$
		with $Y_1=\operatorname{span}\{(1,0)\}$,
		$Y_p=\operatorname{span}\{(0,1)\}$, and the intermediate blocks equal
		to zero, the choice $h=e_1$ and
		$E=\operatorname{span}\{e_2,e_3\}$ gives
		$$
		\Psi_p(h)\xi=(\xi_3,\xi_2),
		\qquad
		\Psi_p(h)h=0,
		\qquad
		\Psi_p(h)(E)=\mathbb R^2.
		$$
		Thus Theorem~\ref{thm:p-factor-covering} applies without any change in
		the triangular factor operator.
		
		Nevertheless, no strict weighted power gap is available. Indeed, for a
		positive weight vector $\lambda=(\lambda_1,\lambda_2,\lambda_3)$, set
		$$
		\delta_t^\lambda x
		:=
		(t^{\lambda_1}x_1,t^{\lambda_2}x_2,t^{\lambda_3}x_3),
		\qquad
		\alpha:=(p-1)\lambda_1+\lambda_2.
		$$
		With $u=(1,1,1)$ and
		$\lambda_*:=\min\{\lambda_1,\lambda_2,\lambda_3\}$, for every
		$\gamma>0$ we have
		$$
		t^{-(\alpha+\gamma)}
		|g(\delta_t^\lambda u)|
		=
		\frac{t^{-\gamma}}
		{\lambda_*\log(1/t)+O(1)}
		\longrightarrow+\infty.
		$$
		Consequently, the remainder is not
		$O(t^{\alpha+\gamma})$ along this ray for any $\gamma>0$ and therefore
		falls outside the strict weighted power-gap subclass covered by
		\cite[Theorem~3.2]{ArutyunovZhukovskiySbornik2025}; see also
		\cite{ArutyunovZhukovskiyMathNotes2025}. Nevertheless, the present $C^p$
		selected-direction theorem applies.
	\end{remark}
	
	\section{Transfer through an exact partial model}
	\label{sec:partial-p-factor}
	
	\subsection{Finite-scale transfer from the exact partial model}
	
	We now pass from $f$ to $G=f+F$ while retaining the exact value
	$F(x_t+\xi)$ in the model. The finite-scale result below transfers an
	assumed model covering, together with acyclicity of its localized fibres,
	to the original generalized equation. It is an auxiliary template; a
	sufficient convex-process criterion is developed in
	Section~\ref{sec:process-criterion}.
	
	Let the standing assumptions of the preceding section hold, assume that
	$Y$ is finite-dimensional, let $F:X\tto Y$ have a closed graph, choose
	$\bv\in F(\bx)$, and let
	$$
	G:U\tto Y,\quad G(x):=f(x)+F(x),
	\quad
	\by:=f(\bx)+\bv,
	\quad\text{and}\quad
	x_t:=\bx+th.
	$$
	Given a finite-dimensional subspace $E\subset X$, $\rho>0$, and
	$v_t\in F(x_t)$ such that $x_t\in U$, define
	$$
	\Omega_{t,\rho}:=\rho t\ball_E,
	\quad
	y_t:=f(x_t)+v_t,
	$$
	and
	\begin{equation}
		\mathcal G_t^p(x_t+\xi)
		:=f(x_t)+A_t\xi+F(x_t+\xi)\quad\text{for}
		\quad \xi\in E,\quad\text{and}\quad x_t+\xi\in U.
		\label{eq:PG1}
	\end{equation}
	Only the single-valued increment is replaced by its triangular factor model
	relative to the chosen decomposition.
	
	\begin{theorem}
		\label{thm:p-partial-finite}
		Let $E\subset X$ be finite-dimensional, let $t,\rho,c,\ell>0$, and let
		$v_t\in F(x_t)$.  Suppose that $x_t+\Omega_{t,\rho}\subset U$ and that
		\begin{enumerate}[label={\rm (\roman*)}]
			\item for every $\xi\in\Omega_{t,\rho}$ we have
			
			$$
			\left\|D_t^{-1}\bigl(
			f(x_t+\xi)-f(x_t)-A_t\xi
			\bigr)\right\|_{\Sigma}
			\le\ell\rho t;
			$$
			\item
			$$
			\mathcal G_t^p(x_t+\Omega_{t,\rho})
			\supset
			y_t+D_t((c+\ell)\rho t\ball_Y^{\Sigma});
			$$
			\item for every $w\in D_t((c+\ell)\rho t\ball_Y^{\Sigma})$, the set
			$$
			\{\xi\in\Omega_{t,\rho}:
			y_t+w\in\mathcal G_t^p(x_t+\xi)\}
			$$
			is either empty or acyclic.
		\end{enumerate}
		Then
		\begin{equation}
			y_t+D_t(c\rho t\ball_Y^{\Sigma})
			\subset(f+F)(x_t+\Omega_{t,\rho}).
			\label{eq:PG5}
		\end{equation}
	\end{theorem}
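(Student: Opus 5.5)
The plan is to apply Corollary~\ref{cor:error-set} on the compact convex domain $\Omega:=\Omega_{t,\rho}$. Convexity makes $\Omega$ an acyclic ANR. We work in the correction variable $\xi\in E$, with target $Y$ normed by $\|\cdot\|_\Sigma$. Translating the target by $y_t$, define
$$
\widehat G(\xi):=F(x_t+\xi)-v_t,\qquad
g(\xi):=A_t\xi,\qquad
\widehat f(\xi):=f(x_t+\xi)-f(x_t),
$$
for $\xi\in\Omega$. Then
$$
(g+\widehat G)(\xi)=\mathcal G_t^p(x_t+\xi)-y_t
\quad\text{and}\quad
(\widehat f+\widehat G)(\xi)=(f+F)(x_t+\xi)-y_t.
$$
We also set
$$
\Gamma:=D_t(c\rho t\ball_Y^\Sigma)
\quad\text{and}\quad
\Xi:=D_t(\ell\rho t\ball_Y^\Sigma).
$$
Since $D_t$ is linear and $\ball_Y^\Sigma$ is convex, $\Gamma+\Xi=D_t((c+\ell)\rho t\ball_Y^\Sigma)$, exactly as in the proof of Corollary~\ref{thm:scaled-transfer}.

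Next we check the hypotheses of Corollary~\ref{cor:error-set} one by one. Hypothesis (i) there requires $(g-\widehat f)(\xi)\in\Xi$. Up to a sign this is assumption (i), since $\|D_t^{-1}(\widehat f(\xi)-A_t\xi)\|_\Sigma\le\ell\rho t$ and $\Xi$ is symmetric. Hypothesis (ii) is the inclusion $(g+\widehat G)(\Omega)\supset\Gamma+\Xi$, which is assumption (ii) after subtracting $y_t$. Hypothesis (iii) concerns the fibre $(g+\widehat G)^{-1}(w)\cap\Omega$ for $w\in\Gamma+\Xi$. This fibre equals $\{\xi\in\Omega:y_t+w\in\mathcal G_t^p(x_t+\xi)\}$, which is empty or acyclic by assumption (iii).

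The topological requirements also need checking. The map $\widehat f$ is continuous on $\Omega$ because $f$ is $C^p$ and $x_t+\Omega\subset U$, and $g$ is linear on the finite-dimensional space $E$. The restriction $\widehat G|_\Omega$ has closed graph because $F$ has closed graph and $\xi\mapsto x_t+\xi$ is continuous. The spaces $E$ and $Y$ are finite-dimensional, hence Fréchet. One small point: Corollary~\ref{cor:error-set} is stated for $G$ defined on the whole space $E$, but only $G|_\Omega$ enters its proof. Where $x_t+\xi\notin U$ we may therefore set $\widehat G$ to be empty, or simply apply it on $\Omega$.

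The corollary then yields $(\widehat f+\widehat G)(\Omega)\supset\Gamma$. Adding $y_t$ gives \eqref{eq:PG5}, because the values $f(x_t)$ and $v_t$ recombine into $f(x_t+\xi)+F(x_t+\xi)$.

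I expect no real obstacle. The only care needed is the bookkeeping of the translations by $f(x_t)$ and $v_t$, so that the fibre sets in (iii) match exactly, together with the closed-graph verification on the compact domain.
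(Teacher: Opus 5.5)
Your proof is correct and is the paper's argument. The paper uses the same translated maps $\phi_t(\xi)=f(x_t+\xi)-f(x_t)$ and $F_t(\xi)=F(x_t+\xi)-v_t$ and invokes Corollary~\ref{thm:scaled-transfer} with $S=D_t$, $\tau=\rho t$, $A=A_t\vert_E$, $\gamma=c$, $\delta=\ell$, which unwinds to Corollary~\ref{cor:error-set} with exactly your $g$, $\Gamma$, and $\Xi$. Your worry about points with $x_t+\xi\notin U$ is unnecessary: $F$ is assumed to have a closed graph on all of $X$, so $\widehat G$ is already defined on all of $E$.
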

	
	\begin{proof}
		Let $\phi_t:\Omega_{t,\rho}\longrightarrow Y$ and $F_t:E\tto Y$ be given by
		$$
		\phi_t(\xi):=f(x_t+\xi)-f(x_t)
		\quad\text{and}\quad
		F_t(\xi):=F(x_t+\xi)-v_t.
		$$
		The mapping $F_t$ has a closed graph.
		By {\rm (i)},
		$$
		A_t\xi-\phi_t(\xi)
		\in D_t(\ell\rho t\ball_Y^\Sigma)
		\quad\text{for each}\quad\xi\in\Omega_{t,\rho}.
		$$
		After translation by $y_t=f(x_t)+v_t$, {\rm (ii)}--{\rm (iii)} give
		$$
		(A_t+F_t)(\Omega_{t,\rho})
		\supset D_t((c+\ell)\rho t\ball_Y^\Sigma)
		$$
		and the corresponding empty-or-acyclic fibre condition. Corollary~%
		\ref{thm:scaled-transfer}, applied with domain space $E$, with $Y$ endowed
		with $\|\cdot\|_\Sigma$, and with
		$$
		\Omega:=\Omega_{t,\rho},\quad S:=D_t,\quad \tau:=\rho t,\quad
		A:=A_t\vert_E,\quad F:=F_t,\quad f:=\phi_t,\quad
		\gamma:=c,\quad\delta:=\ell,
		$$
		yields
		$$
		(\phi_t+F_t)(\Omega_{t,\rho})
		\supset D_t(c\rho t\ball_Y^\Sigma).
		$$
		Adding $y_t$ gives \eqref{eq:PG5}.
	\end{proof}
	
	Theorem~\ref{thm:p-partial-finite} is a single-scale statement centred
	at $y_t$. Lemma~\ref{lem:p-factor-scaled-error} makes its smooth
	approximation error arbitrarily small on a sufficiently thin tube. What
	remains is a model covering with a scale-independent positive margin,
	acyclic model fibres, and an anisotropically negligible displacement of
	the centre. The next definition packages precisely these requirements.
	
	\subsection{An asymptotic exact-model transfer condition}
	
	\begin{definition}
		\label{def:exact-partial-transfer}
		The generalized equation $G=f+F$ is said to \emph{satisfy the exact
			partial-model transfer condition relative to the chosen decomposition}
		at $(\bx,\by)$ with respect to $h$ if there exist a
		finite-dimensional subspace $E\subset X$, constants $a,\rho_0,r_0>0$,
		a choice $v_t\in F(x_t)$ for every $t\in(0,r_0]$, and, for every
		$\rho\in(0,\rho_0]$, a radius $r_\rho\in(0,r_0]$, such that
		$x_t+\rho_0t\ball_E\subset U$ for every $t\in(0,r_0]$ and, with
		$y_t=f(x_t)+v_t$,
		\begin{enumerate}[label={\rm (\roman*)}]
			\item
			\begin{equation*}
				\frac{\|D_t^{-1}(y_t-\by)\|_{\Sigma}}{t}\to0
				\quad\text{as}\quad t\downarrow0;
			\end{equation*}
			\item for every $\rho\in(0,\rho_0]$ and every $t\in(0,r_\rho]$, we have
			$$
			\mathcal G_t^p(x_t+\rho t\ball_E)
			\supset y_t+D_t(a\rho t\ball_Y^\Sigma);
			$$
			\item for every $\rho\in(0,\rho_0]$, every $t\in(0,r_\rho]$, and every
			$w\in D_t(a\rho t\ball_Y^{\Sigma})$, the set
			\begin{equation*}
				\{\xi\in\rho t\ball_E:
				y_t+w\in\mathcal G_t^p(x_t+\xi)\}
			\end{equation*}
			is either empty or acyclic.
		\end{enumerate}
	\end{definition}
	The terminology emphasizes that conditions {\rm (ii)}--{\rm (iii)} already
	concern the covering and the topology of the exact model. They are inputs to the
	transfer argument, not an independent regularity characterization of $F$.
	
	\begin{remark}[Elementary checks for Definition~\ref{def:exact-partial-transfer}]
		\label{rem:partial-elementary-checks}
		The conditions in Definition~\ref{def:exact-partial-transfer} can be verified separately.
		
		First, the tube condition is automatic after decreasing $r_0$. Indeed,
		if $\bx+d\ball_X\subset U$, then
		$$
		x_t+\rho_0t\ball_E
		\subset
		\bx+t(\|h\|+\rho_0)\ball_X
		\subset U
		$$
		whenever $r_0(\|h\|+\rho_0)\le d$.
		
		For the centring condition, observe that
		$$
		\frac{\|D_t^{-1}(y_t-\by)\|_\Sigma}{t}
		=
		\sum_{i=1}^p
		\frac{
			\|P_i(f(x_t)-f(\bx)+v_t-\bv)\|
		}{t^i}.
		$$
		Consequently, condition~{\rm (i)} holds if and only if
		$$
		P_i\bigl(f(x_t)-f(\bx)+v_t-\bv\bigr)=o(t^i),
		\qquad i=1,\ldots,p.
		$$
		If the triangular kernel condition \eqref{eq:P4} holds, then
		$$
		P_i(f(x_t)-f(\bx))=o(t^i),
		$$
		and condition~{\rm (i)} is therefore equivalent to
		$$
		P_i(v_t-\bv)=o(t^i),
		\qquad i=1,\ldots,p.
		$$
		In particular, it is automatic if $\bv\in F(x_t)$ for all sufficiently
		small $t$ and one chooses $v_t=\bv$.
		
		Finally, condition~{\rm (iii)} is automatic if $\gph F$ is convex.
		Indeed, the fibre occurring there can be written as
		$$
		\rho t\ball_E\cap
		\bigl\{
		\xi\in E:
		(x_t+\xi,v_t+w-A_t\xi)\in\gph F
		\bigr\}.
		$$
		It is a closed convex subset of the compact finite-dimensional ball
		$\rho t\ball_E$. Hence it is either empty or a nonempty compact convex
		set; in the latter case it is acyclic. Notice that convexity of the
		individual values
		$F(x)$ alone is not sufficient; convexity of the relevant part of
		$\gph F$, or directly of the localized fibres, is needed.
	\end{remark}
	Since an acyclic set is nonempty, conditions~{\rm (ii)}--{\rm (iii)}
	are equivalent to requiring every fibre displayed in~{\rm (iii)} to be
	acyclic.  We use the split form to distinguish model covering from the
	topological condition.
	
	\begin{theorem}
		\label{thm:exact-partial-transfer}
		Suppose that $G:=f+F$ satisfies the exact partial-model transfer condition
		relative to the chosen decomposition at $(\bx,\by)$ with respect to $h$.
		Then there exist a finite-dimensional
		subspace $E\subset X$ and constants $c,\rho,r>0$ such that
		\begin{equation}
			\by+D_t(ct\ball_Y^{\Sigma})
			\subset(f+F)(\bx+th+\rho t\ball_E)
			\quad\text{for every}\quad t\in(0,r].
			\label{eq:PG9}
		\end{equation}
	\end{theorem}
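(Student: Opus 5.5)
The plan is to combine Theorem~\ref{thm:p-partial-finite}, applied at each small scale $t$, with the smooth error control of Lemma~\ref{lem:p-factor-scaled-error}. The centring condition~(i) of Definition~\ref{def:exact-partial-transfer} is then used to recentre the resulting inclusion from $y_t$ to $\by$.

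Take $E$, $a$, $\rho_0$, $r_0$, $v_t$ and $r_\rho$ from Definition~\ref{def:exact-partial-transfer}. Set $\ell:=a/4$ and $c_0:=a-\ell=3a/4$.

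\textbf{Step 1: choose the tube radius.} Apply Lemma~\ref{lem:p-factor-scaled-error}(i) on $E$ with $\varepsilon:=\ell$. This gives $\rho_1,r_1>0$ such that, for $t\in(0,r_1]$ and $\xi\in\rho_1t\ball_E$,
$$
\bigl\|D_t^{-1}\bigl(f(x_t+\xi)-f(x_t)-A_t\xi\bigr)\bigr\|_\Sigma\le\ell\|\xi\|.
$$
The estimate persists on any thinner tube. Fix $\rho:=\min\{\rho_0,\rho_1\}$. Then $\ell\|\xi\|\le\ell\rho t$ on $\Omega_{t,\rho}$, which is hypothesis~(i) of Theorem~\ref{thm:p-partial-finite}.

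\textbf{Step 2: apply the finite-scale theorem.} For $t\le\min\{r_\rho,r_1\}$, conditions~(ii)--(iii) of Definition~\ref{def:exact-partial-transfer} at this $\rho$ are exactly hypotheses~(ii)--(iii) of Theorem~\ref{thm:p-partial-finite} with $c_0+\ell=a$. The tube inclusion $x_t+\Omega_{t,\rho}\subset U$ holds because $\rho\le\rho_0$. Theorem~\ref{thm:p-partial-finite} therefore gives
$$
y_t+D_t(c_0\rho t\ball_Y^\Sigma)\subset(f+F)(x_t+\rho t\ball_E).
$$

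\textbf{Step 3: recentre at $\by$.} This is the only place where the anisotropic structure matters. By condition~(i), after shrinking $r$ we have $\|D_t^{-1}(y_t-\by)\|_\Sigma\le (a/4)\rho t$. Now let $w\in D_t(c\rho t\ball_Y^\Sigma)$ with $c$ to be fixed. Then
$$
\by+w=y_t+D_t\bigl(D_t^{-1}(\by-y_t)+D_t^{-1}w\bigr),
$$
and linearity of $D_t^{-1}$ together with the triangle inequality in $\|\cdot\|_\Sigma$ bounds the inner vector by $(a/4+c)\rho t$. Choosing $c:=a/2$ gives $a/4+c\le c_0$, so Step~2 applies and
$$
\by+D_t\bigl(\tfrac a2\rho t\ball_Y^\Sigma\bigr)\subset(f+F)(\bx+th+\rho t\ball_E).
$$
Renaming $c\rho$ as the constant $c$ in~\eqref{eq:PG9} finishes the argument, with $r:=\min\{r_\rho,r_1,\text{the centring radius}\}$.

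The only subtlety I expect is the order of quantifiers. $\rho$ must be fixed before $r_\rho$ is read off, and condition~(ii) must hold uniformly at that fixed $\rho$ with a margin $a$ independent of $t$. Definition~\ref{def:exact-partial-transfer} provides exactly this. The centring error is $o(t)$ in the scaled norm, so it is absorbed at fixed $\rho$ by shrinking $r$.
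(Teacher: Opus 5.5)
Your proof is correct and follows the paper's own argument essentially step for step: $\varepsilon=\ell=a/4$ in Lemma~\ref{lem:p-factor-scaled-error}, Theorem~\ref{thm:p-partial-finite} applied at a fixed $\rho$ with $c=3a/4$, and recentring via condition~(i) with margin $a\rho t/4$, giving the final constant $a\rho/2$. Your explicit choice $\rho:=\min\{\rho_0,\rho_1\}$, with the remark that the lemma's estimate persists on thinner tubes, spells out the paper's ``choose $\rho\in(0,\rho_0]$'' step.
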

	
	\begin{proof}
		Let $E,a,\rho_0,r_0$ and the radii $r_\rho$ be as in
		Definition~\ref{def:exact-partial-transfer}. Apply
		Lemma~\ref{lem:p-factor-scaled-error} with $\varepsilon=a/4$ and choose
		$\rho\in(0,\rho_0]$ and $r_1>0$ so that its estimate holds for every
		$t\in(0,r_1]$.  Choose $r\in(0,\min\{r_1,r_\rho\}]$ sufficiently small
		that
		$$
		\|D_t^{-1}(y_t-\by)\|_{\Sigma}
		\le\frac{a\rho}{4}t
		\quad\text{for every} \quad t\in(0,r].
		$$
		For every such $t$, Theorem~\ref{thm:p-partial-finite}, with
		$c:=3a/4$ and $\ell:=a/4$, yields
		$$
		y_t+D_t\left(\frac{3a\rho}{4}t\ball_Y^{\Sigma}\right)
		\subset(f+F)(x_t+\rho t\ball_E).
		$$
		Therefore
		$$
		\by+D_t\left(\frac{a\rho}{2}t\ball_Y^\Sigma\right)
		\subset
		y_t+D_t\left(\frac{3a\rho}{4}t\ball_Y^\Sigma\right)
		\subset(f+F)(x_t+\rho t\ball_E),
		$$
		which proves \eqref{eq:PG9} with $c:=a\rho/2$.
	\end{proof}
	
	\begin{remark}[Model fibres]
		No graphical derivative or proto-derivative of $F$ is used: the
		set-valued mapping enters the model through the exact value $F(x_t+\xi)$.  If
		$\gph F$ is convex, the model fibres are convex; hence the
		empty-or-acyclic fibre condition is automatic, while model covering remains
		a separate assumption.
	\end{remark}
	
	We next replace the assumed exact-model covering by a sufficient condition. A closed convex process is embedded into $F$ through an inner
	approximation. Surjectivity of its sum with $\Psi_p(h)$ supplies the
	covering, while convexity supplies the acyclicity required by the transfer
	argument.

	\section{A convex-process criterion for generalized equations}
	\label{sec:process-criterion}
	
	\subsection{Covering by surjective convex processes}
	
	We now prove the generalized-equation criterion. In contrast with
	Section~\ref{sec:partial-p-factor}, model covering is not assumed. A closed
	convex process is embedded into the set-valued term through an explicit
	inner approximation, and its sum with the smooth triangular factor operator
	is assumed surjective. Openness of that sum supplies the covering, its
	convex fibres supply the topology required by range transfer, and an
	anisotropically negligible continuous remainder links the process to the
	original generalized equation. This is an explicit mechanism by which $F$ can supply directions missing
	from $\Psi_p(h)$.
	
	A set-valued mapping $\mathcal H:E\tto Y$ is a \emph{closed convex
		process} if $\gph\mathcal H$ is a closed convex cone.
	
	\begin{lemma}
		\label{lem:convex-process-covering}
		Let $E$ be a finite-dimensional normed space, and let $Y$ be
		finite-dimensional and equipped with the anisotropic range structure of
		Definition~\ref{def:anisotropic-structure}. Let
		$\mathcal M:E\tto Y$ be a closed convex process with
		$\rge\mathcal M=Y$.  Then there is an $\alpha>0$ such that
		$$
		\mathcal M(\ball_E)\supset\alpha\ball_Y^{\Sigma}.
		$$
		Consequently,
		$$
		(D_t\mathcal M)(\rho t\ball_E)
		\supset D_t(\alpha\rho t\ball_Y^{\Sigma})
		$$
		for all $\rho,t>0$.
	\end{lemma}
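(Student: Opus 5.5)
This is a finite-dimensional Robinson--Ursescu statement; I would prove it directly from convexity and compactness rather than cite the theorem.

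\emph{Reduction to finitely many points.} Set $K:=\mathcal M(\ball_E)$. Since $\gph\mathcal M$ is convex, $K$ is convex, and since $\gph\mathcal M$ is a cone, $\mathcal M(\lambda e)=\lambda\mathcal M(e)$ for $\lambda>0$ and $0\in\mathcal M(0)$. Hence $0\in K$ and $\mathcal M(\lambda\ball_E)=\lambda K$. Because $\rge\mathcal M=Y$, we get $Y=\bigcup_{n\ge1}nK$.

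Fix a basis of $Y$ together with its negatives, say $\pm b_1,\dots,\pm b_m$. Each such vector lies in $n_jK$ for some $n_j$. With $N:=\max_j n_j$, convexity and $0\in K$ give $\pm b_j\in NK$ for all $j$. Therefore $NK$ contains the convex hull of $\{\pm b_j\}$. In finite dimensions this hull is a neighborhood of $0$, so it contains $\beta\ball_Y^\Sigma$ for some $\beta>0$. Taking $\alpha:=\beta/N$ gives $\mathcal M(\ball_E)\supset\alpha\ball_Y^\Sigma$.

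This argument avoids closedness of the graph and any Baire argument; finite-dimensionality of $Y$ is what does the work.

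\emph{The scaled statement.} By positive homogeneity,
$$\mathcal M(\rho t\ball_E)=\rho t\,\mathcal M(\ball_E)\supset\alpha\rho t\ball_Y^\Sigma .$$
Applying the linear isomorphism $D_t$ to both sides and using the convention $(D_t\mathcal M)(e)=D_t(\mathcal M(e))$ yields
$$(D_t\mathcal M)(\rho t\ball_E)=D_t\bigl(\mathcal M(\rho t\ball_E)\bigr)\supset D_t(\alpha\rho t\ball_Y^\Sigma).$$

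\emph{Main obstacle.} The only delicate point is the homogeneity identity $\mathcal M(\lambda\ball_E)=\lambda\mathcal M(\ball_E)$. It follows from the cone property applied to the pairs $(e,y)\in\gph\mathcal M$: the pair $(e,y)$ lies in the graph exactly when $(\lambda e,\lambda y)$ does.

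The membership $0\in\mathcal M(0)$, used to pass from $\pm b_j\in n_jK$ to $\pm b_j\in NK$, needs a short justification. The graph is a nonempty closed cone, since $\rge\mathcal M=Y$ makes it nonempty, so it contains $(0,0)$.
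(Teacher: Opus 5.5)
Your proof is correct, but it takes a different route from the paper. The paper invokes the Robinson--Ursescu theorem for closed convex processes to get $\beta\mathop{\rm int}\ball_Y^\Sigma\subset\mathcal M(\mathop{\rm int}\ball_E)$. It then takes $\alpha<\beta$ to pass to closed balls. The scaling step by positive homogeneity and $D_t$ is the same as yours.

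You prove the finite-dimensional case directly. $K=\mathcal M(\ball_E)$ is convex and absorbing, and in finite dimensions such a set is a neighbourhood of $0$, as the hull of $\pm b_j$ shows. Your route is elementary and slightly more general. It needs no completeness or Baire argument, and neither closedness of $\gph\mathcal M$ nor $\dim E<\infty$ plays any role; only $\dim Y<\infty$ does.

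Your one appeal to closedness, to get $(0,0)\in\gph\mathcal M$, can be dropped. If $b_1\in n_1K$ and $-b_1\in n_1'K$, then the convex combination of $b_1/n_1$ and $-b_1/n_1'$ with weight $n_1/(n_1+n_1')$ already gives $0\in K$, which is all you use.

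The citation buys brevity and places the lemma in the standard Banach-space framework. The closed-graph hypothesis is needed later in any case. In Theorem~\ref{thm:p-process-criterion}, $D_t\mathcal M$ enters the Eilenberg--Montgomery range transfer, which requires a closed graph and compact fibres.
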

	
	\begin{proof}
		The Robinson--Ursescu theorem for closed convex processes gives linear
		openness at $(0,0)$; see
		\cite{UrsescuConvexGraph,RobinsonConvexMultivalued} and
		\cite[Theorem~3.1]{CibulkaRoubalSetValued}.
		Thus there is a $\beta>0$ such that
		$$
		\beta\mathop{\rm int}\ball_Y^\Sigma
		\subset\mathcal M(\mathop{\rm int}\ball_E).
		$$
		Choose $0<\alpha<\beta$.  Then
		$$
		\alpha\ball_Y^\Sigma
		\subset\beta\mathop{\rm int}\ball_Y^\Sigma
		\subset\mathcal M(\ball_E).
		$$
		Positive homogeneity and application of $D_t$ give the asserted
		scaling.
	\end{proof}
	
	Lemma~\ref{lem:convex-process-covering} states that this modulus is positive
	when $\mathcal M$ is surjective. Any smaller positive $\alpha$ may be used
	in the estimates below.
	
	Given a finite-dimensional subspace $E\subset X$ and a closed convex process
	$\mathcal H:E\tto Y$, define the
	\emph{set-valued triangular factor process relative to the chosen
		decomposition} $\Psi_{p,\mathcal H}^{E}(h):E\tto Y$ by
	\begin{equation*}
		\Psi_{p,\mathcal H}^{E}(h)(\xi)
		:=(\Psi_p(h)\vert_E)\xi+\mathcal H(\xi).
	\end{equation*}
	
	\subsection{The generalized-equation criterion}
	
	The theorem below separates three checks: a process-valued inner
	approximation of $F$, an anisotropically negligible continuous error, and
	control of the displacement of the base value. Surjectivity of
	$\Psi_{p,\mathcal H}^{E}(h)$ is the range condition that can combine smooth
	and set-valued directions.
	
	\begin{theorem}
		\label{thm:p-process-criterion}
		Let $p\ge2$, let $f:U\longrightarrow Y$ be a $C^p$ mapping, and fix
		$\bx\in U$ and $h\in X$ as in Section~\ref{sec:p-factor}; retain the
		definitions of $D_t$, $A_t$, and $x_t=\bx+th$ from that section. Let $Y$
		be finite-dimensional, let $E\subset X$ be a finite-dimensional subspace,
		assume that \eqref{eq:P1} and \eqref{eq:P4} hold, and let $F:X\tto Y$.
		Choose
		$\bv\in F(\bx)$ and put $\by=f(\bx)+\bv$.  Let
		$\mathcal H:E\tto Y$ be a closed convex process such that
		$$
		\rge\Psi_{p,\mathcal H}^{E}(h)=Y.
		$$
		Assume that there are $\rho_0,r_0>0$, points $v_t\in F(x_t)$, and
		continuous mappings $s_t:\rho_0t\ball_E\longrightarrow Y$ for $t\in(0,r_0]$ such
		that $x_t+\rho_0t\ball_E\subset U$ for every $t\in(0,r_0]$ and the
		following conditions hold:
		\begin{enumerate}[label={\rm (\roman*)}]
			\item
			\begin{equation}
				v_t+s_t(\xi)+(D_t\mathcal H)(\xi)
				\subset F(x_t+\xi)
				\quad\text{for every}\quad t\in(0,r_0]
				\quad\text{and}\quad\xi\in\rho_0t\ball_E;
				\label{eq:process-inner}
			\end{equation}
			\item
			\begin{equation}
				\lim_{\rho\downarrow0}\limsup_{t\downarrow0}
				\frac{1}{\rho t}
				\sup_{\xi\in\rho t\ball_E}
				\|D_t^{-1}s_t(\xi)\|_{\Sigma}=0;
				\label{eq:process-rem}
			\end{equation}
			\item
			\begin{equation}
				\frac{\|D_t^{-1}(v_t-\bv)\|_{\Sigma}}{t}\to0
				\quad\text{as}\quad t\downarrow0.
				\label{eq:process-center}
			\end{equation}
		\end{enumerate}
		Then there are $c,\rho,r>0$ such that
		\begin{equation}
			\by+D_t(ct\ball_Y^{\Sigma})
			\subset(f+F)(\bx+th+\rho t\ball_E)
			\quad\text{for every}\quad t\in(0,r].
			\label{eq:process-cover}
		\end{equation}
	\end{theorem}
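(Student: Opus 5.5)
The plan is to reduce the statement to the scaled range-transfer Corollary~\ref{thm:scaled-transfer}, with the convex process $D_t\mathcal H$ taking the place of the set-valued term. Theorem~\ref{thm:p-process-criterion} does not assume that $\gph F$ is closed, so the fixed-point argument cannot be run on $F$ itself. Instead, I would run it on the closed convex model $A_t+D_t\mathcal H$. Everything else (the smooth increment, the remainder $s_t$, and the base displacement $v_t-\bv$) is collected into a single continuous perturbation. The inner inclusion \eqref{eq:process-inner} then turns the points produced by the transfer into values of $f+F$.

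\emph{Step 1 (the model process).} Set $\mathcal M:=\Psi_{p,\mathcal H}^{E}(h)$. Its graph is the image of $\gph\mathcal H$ under the linear isomorphism $(\xi,\eta)\mapsto(\xi,\Psi_p(h)\xi+\eta)$ of $E\times Y$. Hence $\gph\mathcal M$ is a closed convex cone, and $\rge\mathcal M=Y$ by assumption. Lemma~\ref{lem:convex-process-covering} then gives $\alpha>0$ with
$$
(A_t+D_t\mathcal H)(\rho t\ball_E)=(D_t\mathcal M)(\rho t\ball_E)\supset D_t(\alpha\rho t\ball_Y^\Sigma)
\quad\text{for all }\rho,t>0.
$$
The mapping $\xi\mapsto A_t\xi+D_t\mathcal H(\xi)$ has a closed convex conic graph. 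Consequently each fibre $\{\xi\in\rho t\ball_E: w\in A_t\xi+D_t\mathcal H(\xi)\}$ is a compact convex subset of $\rho t\ball_E$, so it is either empty or acyclic. The domain $\rho t\ball_E$ is compact and convex.

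\emph{Step 2 (choice of parameters).} I would apply Lemma~\ref{lem:p-factor-scaled-error} with $\varepsilon:=\alpha/8$. This step needs care, since $\rho$ must also be small enough for \eqref{eq:process-rem}. The proof of Lemma~\ref{lem:p-factor-scaled-error} shows that its estimate \eqref{eq:P7} persists if $\rho$ is decreased, because the supremum is then taken over a smaller ball. Its centring estimate \eqref{eq:P10} comes from the $o(t)$ bound \eqref{eq:p-factor-center-small-o}, so it is restored for any fixed smaller $\rho$ after decreasing $r$. I would therefore first shrink $\rho\le\rho_0$ so that the $\limsup$ in \eqref{eq:process-rem} is below $\alpha/16$. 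Then I would shrink $r\le r_0$ so that three bounds hold for $t\in(0,r]$:
\begin{itemize}
\item $\sup_{\rho t\ball_E}\|D_t^{-1}s_t\|_\Sigma\le\alpha\rho t/8$;
\item $\|D_t^{-1}(v_t-\bv)\|_\Sigma\le\alpha\rho t/8$, using \eqref{eq:process-center};
\item \eqref{eq:centered-scaled-error} holds with $2\varepsilon\rho t=\alpha\rho t/4$.
\end{itemize}

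\emph{Step 3 (transfer).} For fixed $t$, I would put
$$
\phi_t(\xi):=f(x_t+\xi)-f(\bx)+v_t-\bv+s_t(\xi),\qquad \xi\in\Omega:=\rho t\ball_E.
$$
This map is continuous, and Step~2 gives $A_t\xi-\phi_t(\xi)\in D_t(\tfrac{\alpha}{2}\rho t\ball_Y^\Sigma)$. I would then apply Corollary~\ref{thm:scaled-transfer} with the following choices:
\begin{itemize}
\item $S:=D_t$ and $\tau:=\rho t$;
\item $A:=A_t|_E$ and $F:=D_t\mathcal H$, which has a closed graph;
\item $f:=\phi_t$ and $\gamma=\delta:=\alpha/2$.
\end{itemize}
Step~1 supplies hypotheses (ii) and (iii) of the Corollary. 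The conclusion is $(\phi_t+D_t\mathcal H)(\Omega)\supset D_t(\tfrac{\alpha\rho}{2}t\ball_Y^\Sigma)$.

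Now take any $w$ in this set. Then $w=\phi_t(\xi)+D_t\eta$ for some $\xi\in\Omega$ and $\eta\in\mathcal H(\xi)$. By \eqref{eq:process-inner},
$$
\by+w=f(x_t+\xi)+\bigl(v_t+s_t(\xi)+D_t\eta\bigr)\in(f+F)(x_t+\xi).
$$
This proves \eqref{eq:process-cover} with $c:=\alpha\rho/2$.

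The main obstacle is the order of quantifiers in Step~2. The constant $\rho$ must serve both the smooth error estimate and the double limit in \eqref{eq:process-rem}, and only afterwards is $r$ chosen. I expect this to go through by the monotonicity in $\rho$ of the sup-type estimates, but it is the step where the argument could break.
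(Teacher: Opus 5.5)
Your proposal is correct and follows essentially the paper's argument. It uses the same surjective closed convex process $\mathcal M=\Psi_{p,\mathcal H}^{E}(h)$, Lemma~\ref{lem:convex-process-covering} for the model covering, and Lemma~\ref{lem:p-factor-scaled-error} with restriction to smaller balls so that $\rho$ is fixed before $r$. It then applies Corollary~\ref{thm:scaled-transfer} to the model $D_t\mathcal M=A_t+D_t\mathcal H$ and returns to $f+F$ via \eqref{eq:process-inner}. The only difference is bookkeeping: you absorb the centre displacement $f(x_t)-f(\bx)+v_t-\bv$ into the continuous perturbation and transfer directly around $\by$, whereas the paper transfers around $y_t$ with perturbation $e_t+s_t$ and shifts to $\by$ at the end. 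This gives you $c=\alpha\rho/2$ instead of the paper's $c=\alpha\rho/4$.
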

	
	\begin{proof}
		Let
		$\mathcal M:=\Psi_{p,\mathcal H}^{E}(h)$ and $y_t:=f(x_t)+v_t,$
		and, for $\xi\in\rho_0t\ball_E$,
		$$
		e_t(\xi):=f(x_t+\xi)-f(x_t)-A_t\xi
		\quad\text{and}\quad
		r_t(\xi):=e_t(\xi)+s_t(\xi).
		$$
		By hypothesis, the process $\mathcal M$ is surjective. It is also closed
		and convex, since $\gph\mathcal M$ is the image of $\gph\mathcal H$ under the linear
		homeomorphism
		$$
		(\xi,z)\longmapsto(\xi,\Psi_p(h)\xi+z).
		$$
		For completeness, Lemma~\ref{lem:p-factor-scaled-error} gives the
		quantified implication: for each $\varepsilon>0$ there exist
		$\rho_\varepsilon\in (0,\rho_0]$ and $r_\varepsilon\in (0,r_0]$ such that
		$$
		\sup_{\xi\in\rho t\ball_E}
		\|D_t^{-1}e_t(\xi)\|_\Sigma
		\le\varepsilon\rho t
		$$
		whenever $0<\rho\le\rho_\varepsilon$ and
		$0<t\le r_\varepsilon$.  Indeed, the lemma gives the estimate on
		$\rho_\varepsilon t\ball_E$, and restriction to
		$\rho t\ball_E$ yields the displayed bound.  Consequently,
		$$
		\lim_{\rho\downarrow0}\limsup_{t\downarrow0}
		\frac{1}{\rho t}
		\sup_{\xi\in\rho t\ball_E}
		\|D_t^{-1}e_t(\xi)\|_\Sigma=0.
		$$
		Together with \eqref{eq:process-rem}, this gives
		$$
		\lim_{\rho\downarrow0}\limsup_{t\downarrow0}
		\frac{1}{\rho t}
		\sup_{\xi\in\rho t\ball_E}
		\|D_t^{-1}r_t(\xi)\|_{\Sigma}=0.
		$$
		
		Let $\alpha>0$ be given by
		Lemma~\ref{lem:convex-process-covering}. By the preceding double-limit
		relation, choose and fix $\rho\in(0,\rho_0]$ such that
		$$
		\limsup_{t\downarrow0}
		\frac{1}{\rho t}
		\sup_{\xi\in\rho t\ball_E}
		\|D_t^{-1}r_t(\xi)\|_{\Sigma}
		<\frac{\alpha}{8}.
		$$
		With this $\rho$ fixed, the definition of the limit superior yields an
		$r\in(0,r_0]$ such that
		$$
		\frac{1}{\rho t}
		\sup_{\xi\in\rho t\ball_E}
		\|D_t^{-1}r_t(\xi)\|_{\Sigma}
		\le\frac{\alpha}{4}
		\quad\text{for every}\quad t\in(0,r].
		$$
		Equivalently,
		$$
		\|D_t^{-1}r_t(\xi)\|_{\Sigma}
		\le\frac{\alpha\rho}{4}t
		\quad
		(\xi\in\rho t\ball_E,\ t\in(0,r]).
		$$
		Fix any $t\in(0,r]$ and let
		$\Omega_t:=\rho t\ball_E$ and $\widetilde F_t:=D_t\mathcal M$.
		The mapping $\widetilde F_t$ has a closed graph, and $r_t$ is
		continuous on $\Omega_t$.
		By Lemma~\ref{lem:convex-process-covering},
		$$
		\widetilde F_t(\Omega_t)
		\supset D_t(\alpha\rho t\ball_Y^{\Sigma}).
		$$
		The fibres of $\widetilde F_t$ over this set, restricted to
		$\Omega_t$, are nonempty, compact, and convex. Corollary~%
		\ref{thm:scaled-transfer}, on $E$ and with $Y$ endowed with
		$\|\cdot\|_\Sigma$, applied with
		$$
		S:=D_t,\quad \tau:=t,\quad A:=0,\quad F:=\widetilde F_t,\quad
		f:=r_t,\quad
		\gamma:=\frac{\alpha\rho}{2},\quad\text{and}\quad
		\delta:=\frac{\alpha\rho}{4},
		$$
		yields
		$$
		(r_t+D_t\mathcal M)(\Omega_t)
		\supset
		D_t\left(\frac{\alpha\rho}{2}t\ball_Y^{\Sigma}\right).
		$$
		On the other hand, \eqref{eq:process-inner} gives
		$$
		\begin{aligned}
			y_t+r_t(\xi)+(D_t\mathcal M)(\xi)
			&=f(x_t+\xi)+v_t+s_t(\xi)+(D_t\mathcal H)(\xi)\\
			&\subset(f+F)(x_t+\xi).
		\end{aligned}
		$$
		
		Finally, \eqref{eq:p-factor-center-small-o} and
		\eqref{eq:process-center} imply
		$$
		\frac{\|D_t^{-1}(y_t-\by)\|_{\Sigma}}{t}\to0
		\quad\text{as }t\downarrow0.
		$$
		After decreasing $r$ if necessary, this quotient is at most
		$\alpha\rho/4$.  The last two inclusions then give
		\eqref{eq:process-cover} with $c:=\alpha\rho/4$.
	\end{proof}
	
	\begin{remark}[Anisotropic blow-up interpretation]
		Condition~\eqref{eq:process-inner} has a direct rescaled form. For
		$\zeta\in\rho_0\ball_E$, define
		$$
		\widehat F_t(\zeta)
		:=
		\frac{1}{t}D_t^{-1}\bigl(F(x_t+t\zeta)-v_t\bigr)
		$$
		and
		$$
		\widehat s_t(\zeta)
		:=
		\frac{1}{t}D_t^{-1}s_t(t\zeta).
		$$
		Since $\mathcal H$ is positively homogeneous,
		condition~\eqref{eq:process-inner} is equivalent to
		$$
		\widehat s_t(\zeta)+\mathcal H(\zeta)
		\subset\widehat F_t(\zeta).
		$$
		Thus $\mathcal H$ is an inner convex-process approximation of the
		anisotropic blow-up of $F$, while condition~\eqref{eq:process-rem} makes the
		additional term negligible on shrinking correction balls.
	\end{remark}
	
	Surjectivity of the combined convex process does not by itself provide a
	single-valued correction, and the double-limit remainder condition
	\eqref{eq:process-rem} is not a Lipschitz contraction estimate.  Thus the
	preceding criterion is not a disguised application of the contractive
	alternative in Remark~\ref{rem:nadler-alternative}.
	
	\begin{remark}[Single-valued and cone-valued reductions]
		If $F(x)=\{0\}$ and $\mathcal H(\xi)=\{0\}$, then
		Theorem~\ref{thm:p-process-criterion} reduces to the single-valued
		triangular factor covering theorem relative to the chosen decomposition.
		More generally, if
		$F(x)\supset\bv+C$ locally, where
		$C_i\subset Y_i$ are closed convex cones and
		$$
		C:=\{y\in Y:P_i y\in C_i,\ i=1,\ldots,p\}
		=C_1+\cdots+C_p,
		$$
		then
		$D_tC=C$.  Taking $\mathcal H(\xi)=C$, $v_t=\bv$, and $s_t=0$
		verifies assumptions {\rm (i)}--{\rm (iii)} after decreasing $r_0$ so that
		all corresponding tubes lie in the neighborhood where
		$F(x)\supset\bv+C$. Hence the criterion
		applies whenever
		$$
		\Psi_p(h)(E)+C=Y.
		$$
		This shows that the set-valued part can supply range directions missing
		from $\Psi_p(h)$ itself.
	\end{remark}

	\subsection{Endpoint inclusions with cone constraints}
	
	Let $X$ be a Banach control space, let $\mathcal U\subset X$ be open,
	and let $\mathcal E:\mathcal U\longrightarrow Y$ be a $C^p$ endpoint
	map with finite-dimensional target. For admissible controls
	$\mathcal U_{\rm ad}\subset\mathcal U$ and a closed set $K\subset Y$,
	let
	$$
	G_{\rm end}(u):=\mathcal E(u)+K
	\quad\text{and}\quad
	\mathcal R(y):=G_{\rm end}^{-1}(y)\cap\mathcal U_{\rm ad}.
	$$
	For a base control $\bar u$ and a direction $h$, write
	$$
	\Psi_p^{\mathcal E,\bar u}(h)\xi
	:=
	\sum_{i=1}^p\frac{1}{(i-1)!}
	P_i\mathcal E^{(i)}(\bar u)[h]^{i-1}\xi.
	$$
	Standard hypotheses for controlled Carath\'eodory systems ensure the
	required smoothness of the endpoint map; see
	\cite[Section~20.3]{AgrachevSachkov} and \cite[p.~388]{TrelatEndpoint}.
	
	\begin{corollary}
		\label{cor:terminal-cone-process}
		Let $p\ge2$, equip $Y$ with the anisotropic structure of
		Definition~\ref{def:anisotropic-structure}, and let
		$\mathcal E:\mathcal U\longrightarrow Y$ be $C^p$. Fix
		$\bar u\in\mathcal U_{\rm ad}$ and set
		$$
		C:=\{y\in Y:P_i y\in C_i,\ i=1,\ldots,p\},
		\quad
		K:=C,
		\quad
		\bar v\in C,
		\quad
		\bar y:=\mathcal E(\bar u)+\bar v,
		$$
		where each $C_i\subset Y_i$ is a closed convex cone. Assume the
		triangular condition \eqref{eq:P1} with
		$f:=\mathcal E$ and $\bx:=\bar u$. Choose $h\in X$ such that
		$$
		\Psi_p^{\mathcal E,\bar u}(h)h=0.
		$$
		Suppose that there are a finite-dimensional subspace $E_0\subset X$ and
		$\rho_0,r_0>0$ such that
		$$
		\bar u+th+\rho_0t\ball_{E_0}
		\subset\mathcal U_{\rm ad}\cap\mathcal U
		\qquad (0<t\le r_0).
		$$
		If
		$$
		\Psi_p^{\mathcal E,\bar u}(h)(E_0)+C=Y,
		$$
		then there are $c,\rho,r>0$ such that
		$$
		\bar y+D_t(ct\ball_Y^\Sigma)
		\subset
		G_{\rm end}(\bar u+th+\rho t\ball_{E_0})
		\qquad (0<t\le r).
		$$
		Moreover, for some $M>0$ and a neighborhood $V$ of $\bar y$,
		$$
		\dist(\bar u,\mathcal R(y))
		\le
		M\max_{i\in\mathcal I}\|P_i(y-\bar y)\|^{1/i}
		\qquad (y\in V).
		$$
	\end{corollary}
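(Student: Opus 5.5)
The plan is to reduce Corollary~\ref{cor:terminal-cone-process} to Theorem~\ref{thm:p-process-criterion}, using the cone-valued reduction recorded in the remark on single-valued and cone-valued reductions. Then Corollary~\ref{cor:moving-tube-inverse} converts the resulting tube covering into the inverse estimate, with the constraint set $\mathcal A:=\mathcal U_{\rm ad}$.

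\textbf{Setting up the criterion.} Take $X$ the control space, $U:=\mathcal U$, $f:=\mathcal E$, $\bx:=\bar u$, $F(x):\equiv C$ (closed graph, since $C$ is closed), $\bv:=\bar v\in C=F(\bar u)$, and $E:=E_0$. Then $\Psi_p(h)=\Psi_p^{\mathcal E,\bar u}(h)$, so \eqref{eq:P4} is exactly the assumed kernel condition, and \eqref{eq:P1} is assumed. Let $\mathcal H(\xi):\equiv C$ on $E_0$. Its graph $E_0\times C$ is a closed convex cone, so $\mathcal H$ is a closed convex process. Moreover, $\rge\Psi^{E_0}_{p,\mathcal H}(h)=\Psi_p(h)(E_0)+C=Y$ by hypothesis.

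\textbf{Checking (i)--(iii).} Choose $v_t:=\bar v$ and $s_t:=0$. Condition (ii) is trivial, and condition (iii) holds identically. For condition (i), note that $P_iC=C_i$ and each $C_i$ is a cone. Hence $D_tC=\{y: P_iy\in t^{i-1}C_i\}=C$. Also $C+C\subset C$ because $C$ is a convex cone. Therefore $\bar v+D_t\mathcal H(\xi)=\bar v+C\subset C=F(x_t+\xi)$. After shrinking $r_0$ (or $\rho_0$), Remark~\ref{rem:partial-elementary-checks} gives $x_t+\rho_0t\ball_{E_0}\subset\mathcal U$; this also follows from the assumed tube inclusion. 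Theorem~\ref{thm:p-process-criterion} then yields $c,\rho,r>0$ with
$$\bar y+D_t(ct\ball_Y^\Sigma)\subset(\mathcal E+C)(\bar u+th+\rho t\ball_{E_0})=G_{\rm end}(\bar u+th+\rho t\ball_{E_0})$$
for $t\in(0,r]$. We may take $\rho\le\rho_0$, since the proof of that theorem picks $\rho\in(0,\rho_0]$, and $r\le r_0$.

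\textbf{Inverse estimate.} Apply Corollary~\ref{cor:moving-tube-inverse} with $\mathcal M:=G_{\rm end}$ restricted to $\mathcal U$ and $\mathcal A:=\mathcal U_{\rm ad}\cap\mathcal U$. The set $\mathcal A$ contains $\bar u$, and it contains each tube $\bar u+th+\rho t\ball_{E_0}\subset\bar u+th+\rho_0t\ball_{E_0}\subset\mathcal U_{\rm ad}\cap\mathcal U$. Also $(\bar u,\bar y)\in\gph G_{\rm end}$ because $\bar v\in C$. The restricted conclusion gives
$$\dist(\bar u,G_{\rm end}^{-1}(y)\cap\mathcal U_{\rm ad})\le M q_p(y-\bar y)$$
on a neighborhood $V$ of $\bar y$, where $q_p(y-\bar y)=\max_{i\in\mathcal I}\|P_i(y-\bar y)\|^{1/i}$.

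The only delicate point is verifying $D_tC=C$ and $\bar v+C\subset C$; both rest on the product-of-cones structure of $C$ and its convexity. The rest is bookkeeping to make sure $\rho\le\rho_0$, so that the tubes stay admissible.
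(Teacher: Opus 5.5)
Your proposal is correct and follows the paper's own proof: apply Theorem~\ref{thm:p-process-criterion} with $F\equiv C$, $\mathcal H\equiv C$, $E:=E_0$, $v_t:=\bar v$, $s_t:=0$, using $D_tC=C$, and then apply Corollary~\ref{cor:moving-tube-inverse} with $\mathcal A:=\mathcal U_{\rm ad}$. You also check that the theorem's proof yields $\rho\le\rho_0$ and $r\le r_0$, so the tubes remain admissible; the paper leaves this point implicit.
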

	
	\begin{proof}
		Apply Theorem~\ref{thm:p-process-criterion} with
		$f:=\mathcal E$, $F\equiv C$, $E:=E_0$, $v_t:=\bar v$,
		$s_t:=0$, and $\mathcal H(\xi):=C$. Since $D_tC=C$, the assumed range
		condition makes $\Psi_p^{\mathcal E,\bar u}(h)\vert_{E_0}+\mathcal H$
		surjective, while all remainder and centring terms vanish. The covering
		follows from the theorem, and the admissible-tube inclusion allows
		Corollary~\ref{cor:moving-tube-inverse} to be applied with
		$\mathcal A:=\mathcal U_{\rm ad}$.
	\end{proof}
	
	\section{Examples and applications}
	\label{sec:examples}
	
	\subsection{Selected-direction regularity}
	
	The examples below have three distinct purposes: they separate selected-
	direction regularity from the standard all-directions notions, show how a
	set-valued term can complete a missing factor range and how acyclic exact-
	model fibres may go beyond convex-process reductions, and finally realize
	the anisotropic estimate sharply for the Grushin endpoint map.
	
	\begin{example}[Sharpness along a single regular direction without strong
		$p$-regularity]
		\label{ex:directional-not-strong}
		Let $p\ge2$, let $X:=\R^4$ and $Y:=\R^2$, both with their Euclidean
		norms, and define
		$$
		f(x_1,x_2,x_3,x_4)
		:=
		\bigl(x_4,x_1^{p-1}x_2+x_3^{p+1}\bigr).
		$$
		Take $\bx:=0$ and set
		$$
		Y_1:=\operatorname{span}\{(1,0)\}\quad\text{and}
		\quad
		Y_p:=\operatorname{span}\{(0,1)\}.
		$$
		When $p\ge3$, set $Y_i:=\{0\}$ for $i=2,\ldots,p-1$.
		The second component of $f$ contains no term of degree smaller than
		$p$, so the triangular factor condition holds.
		For this example, the chosen decomposition is also a standard recursive
		$p$-factor decomposition at $0$: indeed,
		$Y_1=\operatorname{im}f'(0)$, the derivatives $f^{(i)}(0)$ vanish for
		$2\le i<p$, and the projected derivative of order $p$ generates the
		remaining block $Y_p$.  Thus the comparison with standard $p$-regularity
		below is legitimate.
		
		Let $h:=e_1$. Direct differentiation gives
		$$
		P_pf^{(p)}(0)[h]^{p-1}\xi
		=
		\bigl(0,(p-1)!\xi_2\bigr).
		$$
		Consequently, with the factorial normalization used in this paper, we have
		$$
		\Psi_p(h)\xi=(\xi_4,\xi_2)
		\quad \text{and}\quad
		\Psi_p(h)h=0.
		$$
		Thus $\Psi_p(h)$ is surjective and
		$$
		\Psi_p(h)\bigl(\operatorname{span}\{e_2,e_4\}\bigr)=Y.
		$$
		The scaled model is even exact on this slice: for
		$\alpha,\beta\in\R$,
		$$
		f(te_1+\alpha e_2+\beta e_4)
		=
		(\beta,t^{p-1}\alpha)
		=D_t(\beta,\alpha).
		$$
		Thus $f$ is $p$-regular along $h$ in the standard terminology, and the
		selected-direction hypothesis of
		Theorem~\ref{thm:p-factor-covering} is satisfied.
		
		Since the chosen decomposition is the standard recursive one, comparison
		with the usual all-directions notions is legitimate. Consider $k:=e_3$.
		Then
		$$
		P_i f^{(i)}(0)[k]^i=0,
		\qquad i=1,\ldots,p,
		$$
		so $k$ belongs to the exact $p$-kernel. On the other hand, the term
		$x_3^{p+1}$ is invisible to the order-$p$ factor operator at the origin,
		and
		$$
		\Psi_p(k)\xi=(\xi_4,0).
		$$
		Thus $\Psi_p(k)$ is not surjective. Therefore $f$ is neither
		$p$-regular at $0$ in the standard all-directions sense nor strongly
		$p$-regular there; see \cite[Definitions~6 and~7]{BednarczukEtAlSurvey}.
		In contrast, $\Psi_p(h)$ is surjective, so
		Theorem~\ref{thm:p-factor-covering} applies along the selected direction
		$h=e_1$ and yields fixed-base H\"older semiregularity of order $1/p$.
		
		This exponent is sharp. For $0<|s|\le1$, set $y^s:=(0,s)$,
		$a:=|s|^{1/p}$, and $\sigma:=\operatorname{sgn}(s)$. Then
		$$
		x^s:=(a,\sigma a,0,0)\in f^{-1}(y^s),
		\qquad
		\|x^s\|=\sqrt2\,|s|^{1/p}.
		$$
		Conversely, if $x\in f^{-1}(y^s)$ and $r:=\|x\|$, then
		$$
		|s|
		\le |x_1|^{p-1}|x_2|+|x_3|^{p+1}
		\le r^p+r^{p+1}.
		$$
		If $r\le1$, this gives $r\ge2^{-1/p}|s|^{1/p}$; if $r>1$, the same
		lower bound is immediate because $|s|\le1$. Therefore
		$$
		2^{-1/p}|s|^{1/p}
		\le \dist\bigl(0,f^{-1}(y^s)\bigr)
		\le \sqrt2\,|s|^{1/p},
		$$
		which rules out every H\"older semiregularity order $\alpha>1/p$.
	\end{example}

	\subsection{Set-valued range completion and acyclic fibres}
	
	The first example in this subsection realizes the missing-direction
	mechanism of Theorem~\ref{thm:p-process-criterion}. The second shows that
	the exact partial model may have contractible nonconvex fibres even when
	no compatible zero-remainder convex-process inner model is available on
	the chosen correction space.
	
	\begin{example}[A nonconstant set-valued term supplies an essential
		missing direction]
		\label{ex:set-valued-missing-directions}
		For $r\in\R$, write $r_+:=\max\{r,0\}$. Let
		$$
		X:=\R^3,\quad Y:=\R^2,\quad\text{and}\quad U:=X,
		$$
		with their Euclidean norms, and define
		$$
		f(x_1,x_2,x_3):=(x_1x_2,0)
		$$
		and
		\begin{equation}
			F(x_1,x_2,x_3)
			:=
			\{0\}\times
			[-x_{1,+}x_{3,+},\,x_{1,+}x_{3,+}].
			\label{eq:nonconstant-F}
		\end{equation}
		The mapping $F$ is nonconstant, has nonempty compact convex values,
		and has a closed graph, because
		$$
		\gph F
		=
		\{(x,y)\in\R^3\times\R^2:
		y_1=0,\ |y_2|\le x_{1,+}x_{3,+}\}.
		$$
		
		Take $p=2$, $\bx:=0$, $\bv:=0$, $h:=e_1$, and
		$$
		Y_1:=\{0\},\quad Y_2:=Y.
		$$
		The initial zero block is permitted by
		Remark~\ref{rem:grading-dependence}; it records that no target direction
		is assigned first-order scale.
		Thus $P_1=0$, $P_2=I$, and $D_t=tI$. Moreover,
		$$
		\Psi_2(h)\xi=(\xi_2,0),
		\qquad
		\Psi_2(h)h=0.
		$$
		Consequently, the smooth triangular factor operator $\Psi_2(h)$ relative to
		the chosen decomposition has range
		$\R\times\{0\}$ and misses the vertical direction.
		
		Let $E:=\operatorname{span}\{e_2,e_3\}$ and define
		$\mathcal H:E\tto Y$ by
		$$
		\mathcal H(0,a,b)
		:=
		\begin{cases}
			\{0\}\times[-b,b],&b\ge0,\\
			\varnothing,&b<0.
		\end{cases}
		$$
		Its graph is
		$$
		\gph\mathcal H
		=
		\{((0,a,b),(0,z)):b\ge0,\ |z|\le b\},
		$$
		which is a closed convex cone. Hence $\mathcal H$ is a closed convex
		process. Moreover,
		$$
		\Psi_{2,\mathcal H}^{E}(h)(0,a,b)
		=
		\begin{cases}
			\{(a,z):|z|\le b\},&b\ge0,\\
			\varnothing,&b<0.
		\end{cases}
		$$
		This process is surjective: for every $(u,v)\in\R^2$,
		$$
		(u,v)\in
		\Psi_{2,\mathcal H}^{E}(h)(0,u,|v|).
		$$
		
		Let
		$$
		x_t:=(t,0,0),\quad v_t:=0,\quad\text{and}\quad s_t:=0.
		$$
		Then $v_t\in F(x_t)$. If $\xi=(0,a,b)\in E$ and $b\ge0$, then
		$$
		(D_t\mathcal H)(\xi)
		=
		\{0\}\times[-tb,tb]
		=F(x_t+\xi),
		$$
		whereas for $b<0$ the left-hand side is empty. Thus the
		inner-approximation condition of
		Theorem~\ref{thm:p-process-criterion} holds for every $\xi\in E$.
		The remainder and centring conditions hold with value zero. In fact,
		the smooth approximation is also exact on $x_t+E$:
		$$
		f(x_t+\xi)-f(x_t)-A_t\xi
		=(ta,0)-t(a,0)=0.
		$$
		Therefore all assumptions of
		Theorem~\ref{thm:p-process-criterion} are satisfied, although
		$\Psi_2(h)$ itself is not surjective.
		
		Theorem~\ref{thm:p-process-criterion} therefore yields the corresponding
		quadratic-scale covering and square-root fixed-base inverse estimate.
		The set-valued character is essential at the level of the
		decomposition-relative process model. Any linear selection $L$ of
		$\mathcal H$ must have the form
		$$
		L(0,a,b)=(0,\lambda b)
		$$
		with $\lambda\in[-1,1]$. Hence
		$$
		\bigl(\Psi_2(h)+L\bigr)(\domm\mathcal H)
		=
		\mathbb R\times\lambda[0,+\infty)
		\ne\mathbb R^2.
		$$
		Thus no linear single-valued selection supplies both signs of the
		missing vertical direction; the whole interval-valued process is needed.
		Moreover,
		$$
		\operatorname{diam}F(x)=2x_{1,+}x_{3,+},
		$$
		so $F$ is genuinely nonconstant and is not locally a translation of a
		fixed set.
	\end{example}

	\begin{example}[A hybrid inclusion with contractible nonconvex fibres]
		\label{ex:hybrid-contractible-fibres}
		Let $X:=\mathbb R^6$, equipped with the maximum norm, and write
		$$
		x=(g,a,b_+,c_+,b_-,c_-).
		$$
		Consider the closed mode cones
		$$
		\begin{aligned}
			\mathcal D_+
			&:=\{x:g\ge0,\ b_+\ge0,\ b_-=c_-=0\},\\
			\mathcal D_-
			&:=\{x:g\ge0,\ b_-\ge0,\ b_+=c_+=0\}.
		\end{aligned}
		$$
		For $x\in\mathcal D_\pm$, set
		$$
		B_\pm(x):=g(b_\pm+|c_\pm|)
		$$
		and define
		$$
		\begin{aligned}
			F_+(x)&:=
			\begin{cases}
				\{0\}\times[0,B_+(x)],&x\in\mathcal D_+,\\
				\varnothing,&x\notin\mathcal D_+,
			\end{cases}\\
			F_-(x)&:=
			\begin{cases}
				\{0\}\times[-B_-(x),0],&x\in\mathcal D_-,\\
				\varnothing,&x\notin\mathcal D_-.
			\end{cases}
		\end{aligned}
		$$
		Finally, let
		$$
		f(x):=(a,0),
		\qquad
		F(x):=F_+(x)\cup F_-(x),
		\qquad
		G:=f+F.
		$$
		The two modes meet only on the inactive set
		$b_+=c_+=b_-=c_-=0$, where both values reduce to $\{0\}$. Hence $F$
		has a closed graph and $F(0)=\{0\}$.
		
		Take $p=2$,
		$$
		Y_1:=\mathbb R\times\{0\},
		\qquad
		Y_2:=\{0\}\times\mathbb R,
		\qquad
		D_t(y_1,y_2):=(y_1,ty_2),
		$$
		and let $\bx=\by=\bv=0$ and $h=(1,0,0,0,0,0)$. Use the correction
		space
		$$
		E:=\{(0,\alpha,\beta_+,\gamma_+,\beta_-,\gamma_-):
		\alpha,\beta_\pm,\gamma_\pm\in\mathbb R\}.
		$$
		Then
		$$
		\Psi_2(h)\xi=(\alpha,0),
		\qquad
		\Psi_2(h)h=0.
		$$
		Moreover, $x_t=th$, $v_t=0\in F(x_t)$, $y_t=0$, and the smooth model is
		exact. On the plus mode,
		$$
		\mathcal G_t^2(x_t+\xi)
		=
		\{\alpha\}\times[0,t(\beta_++|\gamma_+|)],
		$$
		while on the minus mode the corresponding interval is
		$$
		\{\alpha\}\times[-t(\beta_-+|\gamma_-|),0].
		$$
		
		Fix $\rho,t>0$ and put $R:=\rho t$. If
		$$
		w=(\alpha,t\eta)\in D_t(R\ball_Y^\Sigma),
		$$
		then $|\alpha|+|\eta|\le R$. For $\eta\ge0$ choose
		$$
		\xi=(0,\alpha,\eta,0,0,0),
		$$
		and for $\eta<0$ choose
		$$
		\xi=(0,\alpha,0,0,-\eta,0).
		$$
		In either case $\|\xi\|\le R$ and
		$w\in\mathcal G_t^2(x_t+\xi)$. Therefore
		$$
		D_t(\rho t\ball_Y^\Sigma)
		\subset
		\mathcal G_t^2(x_t+\rho t\ball_E).
		$$
		
		The localized fibres are contractible but generally nonconvex. For
		$\eta\ne0$, apart from fixed and inactive coordinates, the fibre equals
		$$
		S_{\lambda,R}
		:=
		\{(\beta,\gamma)\in[0,R]\times[-R,R]:
		\beta+|\gamma|\ge\lambda\},
		\qquad
		\lambda:=|\eta|.
		$$
		This set is star-shaped with respect to $(R,0)$, because along the segment
		to $(R,0)$ one has
		$$
		\beta_s+|\gamma_s|
		=
		(1-s)(\beta+|\gamma|)+sR
		\ge\lambda.
		$$
		It is nonconvex when $\lambda>0$, since $(0,\lambda)$ and
		$(0,-\lambda)$ belong to it but their midpoint does not. For $\eta=0$,
		the two mode fibres are star-shaped with respect to their common inactive
		point, so their union is star-shaped as well. Thus every relevant fibre
		is compact and contractible. Definition~\ref{def:exact-partial-transfer}
		is satisfied with $a=1$, and Theorem~\ref{thm:exact-partial-transfer}
		applies.
		
		This example cannot be reduced to a compatible zero-remainder exact
		convex-process inner model on $E$. Indeed, suppose that a closed convex
		process $\mathcal H:E\tto\mathbb R^2$ satisfies
		$$
		D_t\mathcal H(\xi)\subset F(x_t+\xi)
		$$
		for all sufficiently small $t>0$ and all
		$\xi\in\rho_0t\ball_E$. Positive homogeneity and rescaling imply
		$$
		\domm\mathcal H\subset C_+\cup C_-,
		$$
		where
		$$
		\begin{aligned}
			C_+&:=\{\xi\in E:\beta_+\ge0,\ \beta_-=\gamma_-=0\},\\
			C_-&:=\{\xi\in E:\beta_-\ge0,\ \beta_+=\gamma_+=0\}.
		\end{aligned}
		$$
		Since the domain of a convex process is a convex cone, it must be
		contained entirely in $C_+$ or entirely in $C_-$. Indeed, if it contained
		$\xi_+\in C_+\setminus C_-$ and $\xi_-\in C_-\setminus C_+$, then
		$\xi_++\xi_-$ would belong to the domain but not to $C_+\cup C_-$.
		For $\xi\in\domm\mathcal H$ and $z\in\mathcal H(\xi)$, apply the
		inclusion to a sufficiently small positive multiple of $\xi$ and use
		positive homogeneity. It follows that all values of $\mathcal H$ have,
		respectively, nonnegative or nonpositive vertical component. On
		$C_+\cap C_-$, the right-hand side of the inner inclusion is $\{0\}$;
		since $D_t$ is injective, $\mathcal H(\xi)=\{0\}$ for every
		$\xi\in\domm\mathcal H\cap C_+\cap C_-$. Consequently,
		$\Psi_2(h)+\mathcal H$ misses one open vertical half-axis and is not
		surjective. Hence the exact-model transfer applies beyond the
		zero-remainder convex-process reduction.
	\end{example}
	
	\subsection{The Grushin endpoint map}
	
	The final example has a different role from the preceding missing-direction
	and hybrid constructions. Its smooth triangular factor operator is already
	surjective; the purpose is to realize the anisotropic scaling for a classical
	endpoint map and to prove that the resulting square-root law is sharp.
	
	\begin{example}[A bilinear terminal-control problem]
		\label{ex:bilinear-terminal-control}
		Consider the classical Grushin system
		$$
		\dot z_1(s)=u_1(s),\qquad
		\dot z_2(s)=z_1(s)u_2(s),\qquad z(0)=0.
		$$
		Let $\mathcal U:=X:=L^\infty(0,1;\R^2)$, with the pointwise Euclidean
		norm convention
		$$
		\begin{aligned}
			\|u\|_X
			&:=\mathop{\rm ess\,sup}_{s\in(0,1)}
			\sqrt{|u_1(s)|^2+|u_2(s)|^2},\\
			\mathcal U_{\rm ad}
			&:=\{u\in X:|u_i(s)|\le1
			\text{ for almost every }s,\ i=1,2\}.
		\end{aligned}
		$$
		The endpoint map is the continuous quadratic polynomial
		$$
		\mathcal E(u):=\left(
		\int_0^1u_1(s)\,ds,
		\int_0^1\!\left(\int_0^su_1(\sigma)\,d\sigma\right)u_2(s)\,ds
		\right).
		$$
		Thus $\mathcal E$ is $C^\infty$.  With
		$K:=\{0\}\times\R_+$, the inclusion
		$y\in\mathcal E(u)+K$ means
		$z_1(1)=y_1$ and $z_2(1)\le y_2$.
		
		Take $\bar u=\bar y=\bar v=0$,
		$Y_1=\R\times\{0\}$, and $Y_2=\{0\}\times\R$.  Define
		$$
		\begin{aligned}
			h(s):=\bigl(\sqrt{2\pi}\cos(\pi s),0\bigr),\quad
			e_1(s):=(1,0),\quad
			e_2(s):=\bigl(0,\sqrt{2\pi}\sin(\pi s)\bigr),\quad
			E_0:=\operatorname{span}\{e_1,e_2\}.
		\end{aligned}
		$$
		Since $P_2\mathcal E'(0)=0$, the triangular condition holds, and
		$$
		\Psi_2^{\mathcal E,\bar u}(h)v:=
		\left(
		\int_0^1v_1(s)\,ds,
		\sqrt{\frac{2}{\pi}}
		\int_0^1\sin(\pi s)v_2(s)\,ds
		\right).
		$$
		Moreover,
		$$
		\Psi_2^{\mathcal E,\bar u}(h)h=0,\quad
		\Psi_2^{\mathcal E,\bar u}(h)e_1=(1,0),
		\quad \Psi_2^{\mathcal E,\bar u}(h)e_2=(0,1).
		$$
		Thus $\Psi_2^{\mathcal E,\bar u}(h)(E_0)=\R^2$. For sufficiently small
		$\rho_0,r_0>0$, the correction tube lies in
		$\mathcal U_{\rm ad}$, so Corollary~\ref{cor:terminal-cone-process}
		gives $M>0$ and a neighborhood $V$ of $0$ such that
		$$
		\dist(0,\mathcal R(y))
		\le M\max\{|y_1|,|y_2|^{1/2}\}
		\quad (y\in V).
		$$
		
		Relative to the chosen $L^\infty$ norm with pointwise Euclidean norm,
		the exponent and its vertical constant are sharp. For
		$\eta\in(0,1/(2\pi)]$, let
		$$
		y^\eta:=(0,-\eta)
		\quad\text{and}\quad
		u^\eta(s):=\sqrt{2\pi\eta}
		\bigl(\cos(\pi s),-\sin(\pi s)\bigr).
		$$
		Then $u^\eta\in\mathcal U_{\rm ad}$,
		$\mathcal E(u^\eta)=y^\eta$, and
		$\|u^\eta\|_X=\sqrt{2\pi\eta}$.
		Conversely, if $u\in\mathcal R(y^\eta)$, then
		$z_1(0)=z_1(1)=0$ and $z_2(1)\le-\eta$.  Hence, writing
		$\|\cdot\|_2$ for the $L^2(0,1)$-norm, Cauchy--Schwarz and
		Wirtinger's inequality give
		$$
		\begin{aligned}
			\eta&\le
			\left|\int_0^1z_1(s)u_2(s)\,ds\right|
			\le\|z_1\|_2\|u_2\|_2\\
			&\le
			\frac1\pi\|u_1\|_2\|u_2\|_2
			\le
			\frac1{2\pi}\bigl(\|u_1\|_2^2+\|u_2\|_2^2\bigr)
			\le
			\frac1{2\pi}\|u\|_X^2,
		\end{aligned}
		$$
		and therefore
		$\dist(0,\mathcal R(y^\eta))=\sqrt{2\pi\eta}$.
		This agrees with the explicit Grushin geodesics in
		\cite[Section~9]{AgrachevLeeGrushin}; for $\eta=t^2$,
		$u^{t^2}=th-te_2$, consistently with $D_t=P_1+tP_2$.
	\end{example}

	\section{Conclusions and outlook}
	\label{sec:conclusion}
	
	We established a selected-direction triangular covering theorem relative
	to a fixed indexed target decomposition and derived the corresponding
	componentwise inverse estimate.  Initial and intermediate zero blocks are
	allowed and encode missing effective orders, while $p$ denotes the
	largest active block index.  Each nonzero target block $Y_i$ is controlled
	with exponent $1/i$.  The componentwise estimate is
	decomposition-relative, and the resulting scalar H\"older exponent $1/p$
	cannot be improved in general, as shown by
	Example~\ref{ex:directional-not-strong}.
	
	The scaled acyclic range-transfer statement is a direct consequence of
	the Eilenberg--Montgomery fixed-point theorem and provides a convenient
	way to separate model covering, approximation error, and correction-fibre
	topology. It is applied separately at each scale; no uniformity with
	respect to the scale parameter is asserted. For generalized equations,
	the exact partial model is used first only as an auxiliary transfer device:
	its covering and acyclic-fibre properties are assumed and then passed to
	the original generalized equation. The principal structural result is the
	convex-process criterion, in which an explicit inner approximation can be
	checked and can generate target directions missing from the smooth
	triangular factor operator.
	
	The examples distinguish the selected-direction condition from strong
	$p$-regularity, show through Remark~\ref{rem:logarithmic-remainder} that the $C^p$
	triangular hypotheses need not imply the strict weighted power-gap
	assumptions of the cited $\lambda$-truncation theorem, and illustrate
	nonconvex contractible correction fibres in a hybrid inclusion. They also realize an
	essential nonconstant set-valued missing-direction mechanism. The
	terminal-control application recovers the exact square-root law and
	constant along negative vertical perturbations for the bilinear Grushin
	system \cite{AgrachevLeeGrushin}.
	
	All conclusions remain fixed-base semiregularity statements. They use a
	finite-dimensional target and, in the process criterion, a
	finite-dimensional correction space; they do not assert metric
	regularity around the graph point. Natural extensions concern
	infinite-dimensional targets, weaker compactness or admissibility
	assumptions on correction fibres, and parameter-uniform
	generalized-equation criteria.

\end{document}